\numberwithin{equation}{section}
\title{Mixed Hodge structures on cohomology jump ideals}
\author{Louis-Clément Lefèvre}
\date{\today}
\email{louisclement.lefevre@uni-due.de}
\address{Universität Duisburg-Essen, Fakultät Für Mathematik, 45117 Essen, Germany}
\subjclass[2010]{14F35, 14D15, 14C30, 18D50}
\keywords{Mixed Hodge Structures, $L_\infty$ Algebras, Jump Ideals, Homotopy Transfer of Structure}
\theoremstyle{plain}
\newtheorem{theorem}{Theorem}[section]
\newtheorem{lemma}[theorem]{Lemma}
\newtheorem{corollary}[theorem]{Corollary}
\theoremstyle{definition}
\newtheorem{definition}[theorem]{Definition}
\theoremstyle{remark}
\newtheorem{remark}[theorem]{Remark}
\renewcommand\epsilon{\varepsilon}
\renewcommand\phi{\varphi}
\newcommand\RR{\mathbb{R}}
\newcommand\CC{\mathbb{C}}
\newcommand\ZZ{\mathbb{Z}}
\newcommand\QQ{\mathbb{Q}}
\newcommand\kk{\mathbf{k}}
\newcommand\Art{\mathbf{Art}}
\newcommand\Set{\mathbf{Set}}
\newcommand\Ohat{\widehat{\mathcal{O}}}
\newcommand\Mb{\mathbf{M}_{\mathrm{B}}}
\DeclareMathOperator\id{id}
\DeclareMathOperator\Ker{Ker}
\DeclareMathOperator*\End{End}
\DeclareMathOperator*\Hom{Hom}
\DeclareMathOperator\ad{ad}
\DeclareMathOperator\Gr{Gr}
\DeclareMathOperator\Dec{Dec}
\DeclareMathOperator\Def{Def}
\DeclareMathOperator\MC{MC}
\DeclareMathOperator\Sym{Sym}
\DeclareMathOperator\rank{rank}
\DeclareMathOperator\MHC{MHC}
\begin{document}

\begin{abstract}
In previous work, we constructed for a smooth complex variety $X$ and for a linear algebraic group $G$ a mixed Hodge structure on the complete local ring $\widehat{\mathcal{O}}_\rho$ to the moduli space of representations of the fundamental group $\pi_1(X,x)$ into $G$ at a representation $\rho$ underlying a variation of mixed Hodge structure. We now show that the jump ideals $J_k^i \subset \widehat{\mathcal{O}}_\rho$, defining the locus of representations such the the dimension of the cohomology of $X$ in degree $i$ of the associated local system is greater than $k$, are sub-mixed Hodge structures; this is in accordance with various known motivicity results for these loci. In rank one we also recover, and find new cases, where these loci are translated sub-tori of the moduli of representations. Our methods are first transcendental, relying on Hodge theory, and then combined with tools of homotopy and algebra.
\end{abstract}


\maketitle

\section{Introduction}
Let $X$ be a smooth complex variety, that can be either algebraic, or admit a compactification into a compact Kähler manifold (\emph{quasi-Kähler} varieties). Let $G$ be a linear algebraic group over $\CC$ with a fixed embedding into some $GL(N,\CC)$. Our general topic is: study the topology of $X$ via the representations into $G$ of its fundamental group $\pi_1(X,x)$.

\subsection{Jump loci}
{\hfuzz=10pt
The representations of $\pi_1(X,x)$ into $G$ are parametrized by a moduli space $\Hom(\pi_1(X,x), G)$ which also has a structure of affine scheme of finite type. Inside of this, the \emph{cohomology jump loci} are the subset defined as
\begin{equation}
\Sigma_k^i := \left\{ \rho:\pi_1(X,x)\rightarrow G(\CC)\ \big|\ \dim(H^i(X, V_\rho)) \geq k \right\}
\end{equation} }
where $V_\rho$ is the local system associated to $\rho$. These are closed sub-schemes. Given another local system $W$, one can also consider the \emph{relative loci}
\begin{equation}
\Sigma_k^i(W) := \left\{ \rho:\pi_1(X,x)\rightarrow G(\CC)\ \big|\ \dim(H^i(X, V_\rho \otimes W)) \geq k \right\}.
\end{equation}
The structure of these loci has been intensively studied. Good descriptions are known for the local structures, with applications to the topology of varieties: the main objects of interest are the completed local ring $\Ohat_\rho$ at a given representation $\rho$, and the $\Sigma_k^i(W)$ are defined locally by ideals $J_k^i(W)\subset \Ohat_\rho$. The global structure is particularly known in rank one: for $G=\CC^*$ then the moduli space of representations is the product of an algebraic torus $(\CC^*)^{b_1(X)}$ and a finite group, and in many cases the irreducible components of the jump loci are sub-tori translated by torsion points. See for example the survey by Budur-Wang \cite{BudurWangSurvey} for all these results.

More recently, various special arithmetic properties of these loci have been investigated. In \cite{BudurWangAbsolute} for example, a notion of \emph{absolute constructibility} for these loci is developed when $X$ is algebraic. This roughly means that they stay constructible after comparing with the moduli space of vector bundles with connections and applying some Galois automorphism of the field of definition of $X$. It is actually these kind of special properties that forces the sub-torus property.

These various properties indicate that the jump loci are rigid and in some sense come from geometry. Another recent result of Esnault-Kerz \cite{EsnaultKerz} states that in rank one the jump loci as sub-tori are image under the exponential of sub-mixed Hodge structures (see below) of $H^1(X,\CC)$ and they call this \emph{motivicity} result. Further motivicity properties are investigated in an article of Lerer \cite{Lerer}.

Let us mention also, without developing, that our earlier motivation was to improve a theorem of Kapovich-Millson by emphasizing the role of mixed Hodge structures: in \cite{KapovichMillson} they give some restrictions on the algebraic structure of $\Ohat_\rho$ for finite representations of smooth algebraic varieties, and use it to exhibit a class of finitely presented group that cannot be isomorphic to fundamental groups of such varieties. This method was refined by Dimca-Papadima-Suciu \cite{DimcaPapadimaSuciu} using the algebraic structure of the jump loci!

\subsection{Hodge theory}

Recall that Deligne \cite{DeligneII, DeligneIII} has constructed on the cohomology of algebraic varieties, or of quasi-Kähler varieties, a weight filtration $W_\bullet$ defined over $\QQ$ such that the weight-graded parts decompose like cohomology groups of compact Kähler manifolds; this abstract structure on cohomology is called \emph{mixed Hodge structure} (MHS). Put in family, this is the notion of \emph{variation of mixed Hodge structure}.

Then MHS have been constructed on many other topological invariants of these varieties, in particular on the rational homotopy groups by Morgan \cite{Morgan} and \cite{Hain}. A MHS on $\Ohat_\rho$ by Eyssidieux-Simpson \cite{EyssidieuxSimpson} when $X$ is \emph{compact}. We generalized this in \cite{Lefevre2} (representations with finite image) and then in \cite{Lefevre3} for all smooth varieties and representations that are the monodromy of a variation of mixed Hodge structure. In this case, the cohomology of $X$ with local coefficients also carries MHS: this due to Deligne, Zucker, Steenbrink\dots and ultimately Saito with his theory of mixed Hodge modules.

This, with the motivicity result of Esnault-Kerz, is a strong indication of the following result that we are now able to prove.

\begin{theorem}[{\S~\ref{section:geometry}}]
Let $W$ be am admissible variation of mixed Hodge structure over $X$. At a representation $\rho$ which is also the monodromy of variation of mixed Hodge structure, the ideals $J_k^i(W) \subset \Ohat_\rho$ defining $\Sigma_k^i(W) \subset \Hom(\pi_1(X,x), G)$ are sub-mixed Hodge structures.
\end{theorem}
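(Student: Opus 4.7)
The plan is to enhance the mixed Hodge $\Linf$-algebra from \cite{Lefevre3} controlling the formal deformations of $\rho$ with a compatible mixed Hodge $\Linf$-module computing the cohomology $H^\bullet(X, V_\rho \otimes W)$, then pass to the minimal model via homotopy transfer in the mixed Hodge setting, and recognize the jump ideals $J_k^i(W)$ as determinantal ideals of morphisms of mixed Hodge structures.

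First I would construct the module. Let $(L, \omega)$ be the mixed Hodge $\Linf$-algebra from \cite{Lefevre3} with Maurer-Cartan element $\omega$ representing $\rho$. Using the same Hodge-theoretic framework (mixed Hodge complexes of differential forms with coefficients in an admissible variation, in the spirit of Zucker-Steenbrink-Saito), I would build a mixed Hodge complex $M^\bullet$ endowed with a compatible $\Linf$-$L$-module structure such that the twisted complex $(M^\bullet, d_\omega)$ computes $H^\bullet(X, V_\rho \otimes W)$ as a mixed Hodge complex with its Deligne MHS. The module structure should come naturally from the action of endomorphisms of $V_\rho$ on forms with values in $V_\rho \otimes W$, and the admissibility of $W$ (and of the variation underlying $\rho$) is precisely what allows the Hodge-theoretic formalism to apply to these coefficients.

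Next, I would perform homotopy transfer simultaneously on $L$ and $M$ to obtain minimal models $L^{\min}$ and $M^{\min}$, both carrying MHS, whose transferred $\Linf$-operations and module operations are morphisms of MHS. The formal hull of Maurer-Cartan elements of $L^{\min}$ recovers $\Ohat_\rho$ with its MHS as in \cite{Lefevre3}. Over this base one then obtains a canonical universal twisted complex of finitely-generated free $\Ohat_\rho$-modules
\begin{equation*}
\bigl( M^{\min} \,\widehat{\otimes}\, \Ohat_\rho,\, d^{\mathrm{univ}} \bigr),
\end{equation*}
whose fibre at each infinitesimal deformation $\eta$ of $\omega$ is the finite-dimensional complex computing $H^\bullet(X, V_{\rho_\eta} \otimes W)$. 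Each component of $d^{\mathrm{univ}}$ is $\Ohat_\rho$-linear and given by a polynomial (convergent in the $\mathfrak{m}$-adic topology) in the transferred operations, hence is a morphism of MHS.

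Finally, a standard algebraic description identifies $J_k^i(W)$ with an ideal built from unions and sums of Fitting ideals of the two $\Ohat_\rho$-linear differentials of $d^{\mathrm{univ}}$ in degrees $i-1$ and $i$. Since these differentials are morphisms of MHS between finite rank free modules, their minor determinants define morphisms of MHS $\Lambda^r M^{\min,j} \to \Lambda^r M^{\min,j+1}$ tensored with $\Ohat_\rho$, so the ideals of $\Ohat_\rho$ that they generate are sub-MHS; hence so is $J_k^i(W)$. The hard part will be entirely in the first two steps: producing the mixed Hodge $\Linf$-module structure on $M$ compatibly with $L$ and $W$, and carrying out the simultaneous homotopy transfer while keeping all induced operations as morphisms of MHS — this typically requires Deligne's décalage to align the Hodge and weight filtrations on cohomology. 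Once this is set up, the passage to Fitting ideals is purely formal linear algebra in the category of MHS.
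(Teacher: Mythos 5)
Your overall route is the same as the paper's: build a mixed Hodge complex $M$ of forms with values in $V_\rho\otimes W$ as an $L_\infty$ module over the mixed Hodge $L_\infty$ algebra controlling deformations of $\rho$, transfer the whole structure to cohomology so that all operations become morphisms of MHS (using d\'ecalage to align the filtrations, exactly as the paper does by passing from mixed Hodge complexes to absolute Hodge complexes), and then observe that the jump ideals are generated by minors of a universal differential that is a morphism of MHS, so that they are sub-MHS. Your final determinantal step is equivalent to the paper's Tannakian-equivariance argument, and your remark that the only hard work is in producing the compatible module structure and the mixed Hodge homotopy transfer (which the paper gets from Cirici--Sopena) is accurate.

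There is, however, one concrete gap: you do not address the augmentation. The ring $\Ohat_\rho$ is the local ring of the \emph{representation variety} (framed local systems), so it pro-represents the augmented functor $\Def(L,\epsilon)$, where $\epsilon$ evaluates degree-zero forms at the base point --- not $\Def(L)$, whose formal hull of Maurer--Cartan elements modulo gauge would only see conjugacy classes. As written, ``the formal hull of Maurer--Cartan elements of $L^{\min}$ recovers $\Ohat_\rho$'' is therefore not correct for the naive $L$ of $\ad_\rho$-valued forms; one must replace $L$ by the Fiorenza--Manetti $L_\infty$ cone $C$ of $\epsilon$ (which is also what makes $H^0$ vanish, a hypothesis needed both for pro-representability and for the inductive construction of the minimal model). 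This forces a further step that your proposal silently skips: $M$ must be made into an $L_\infty$ module over the cone $C$, not just over $L$, in such a way that the twisted differentials and hence the jump ideals $J_k^i(M\otimes A,d_\omega)$ are unchanged. This is the content of the paper's Theorem~\ref{theorem:module-over-cone} (proved by forming the cone of the induced augmentation on $L\oplus M$ and checking that $\mathfrak{g}$ acts trivially on $M$); it is one of the difficulties the paper singles out, and your argument needs it before the transfer and Fitting-ideal steps can be run. A small cosmetic point besides: the jump ideal is generated by minors of the single map $d_u^{i-1}\oplus d_u^i$ on the direct sum, not by separate Fitting ideals of the two differentials, though your equivariance argument applies verbatim to that map.
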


As a corollary, we recover in rank one from this local structure theorem the sub-torus property. Remarkably, our proof uses only transcendental methods, for example we don't use the comparison with the moduli space of connections nor a Galois action of a field of definition of $X$ as is done in many of the previously cited woks. Hence it works for all smooth varieties, algebraic of quasi-Kähler, and without assumption that the local systems comes from geometry. Thus this also constitutes some new cases of the sub-torus property.

\begin{theorem}[{\S~\ref{section:application}}]
For $G=\CC^*$, the irreducible components of the relative loci $\Sigma_k^i(W)$ are translated sub-tori.
\end{theorem}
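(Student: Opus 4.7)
For $G=\CC^*$, the moduli space $\Hom(\pi_1(X,x),\CC^*)$ is, up to a finite component group, the algebraic torus $T = \Hom(H_1(X,\ZZ)_{\mathrm{free}},\CC^*)$, and it is smooth. At a representation $\rho$ underlying a variation of mixed Hodge structure, the abelian deformation theory is unobstructed, so $\Ohat_\rho \cong \widehat{\Sym}(H^1(X,\CC)^*)$ as local rings, with the mixed Hodge structure induced functorially from the MHS on $H^1(X,\CC)$. The exponential map of $T$ identifies the formal neighborhood of $\rho$ with the formal completion at $0$ of the tangent space $H^1(X,\CC)$; under it, formal completions of translated algebraic sub-tori of $T$ passing through $\rho$ correspond to formal completions of $\QQ$-rational linear subspaces of $H^1(X,\QQ)$.

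Apply the previous theorem at $\rho$: the ideal $J_k^i(W) \subset \Ohat_\rho$ is a sub-MHS. The first step is to show that each minimal prime of $J_k^i(W)$ remains a sub-MHS; this amounts to performing primary decomposition compatibly with the weight filtration and rational structure, a manipulation in the category of mixed Hodge structures on complete local rings. The second and crucial step is a Hodge-theoretic rigidity result: any sub-MHS prime ideal $\mathfrak{p} \subset \widehat{\Sym}(H^1(X,\CC)^*)$ is generated by linear forms, namely by the sub-MHS $\mathfrak{p} \cap H^1(X,\CC)^*$. The idea is to identify the quotient $\Ohat_\rho/\mathfrak{p}$, which inherits an MHS, with a completed symmetric algebra on its cotangent space via a weight argument, using that $H^1(X,\CC)$ of a smooth variety has strictly positive Deligne weights and hence weights on the symmetric algebra grow strictly with the symmetric degree. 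Dualizing yields a $\QQ$-rational sub-MHS $V \subset H^1(X,\CC)$ such that the formal subscheme cut out by $\mathfrak{p}$ is, under the exponential, the formal completion of the translated algebraic sub-torus $\rho\cdot\exp(V) \subset T$.

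Globally, each irreducible component $Z$ of $\Sigma_k^i(W)$ is a closed algebraic subscheme of $T$ and meets the dense locus of representations underlying a variation of MHS (notably the torsion points of $T$, which admit a natural such structure via Tate twists). At any such point of $Z$, the local analysis above identifies the formal completion of $Z$ with that of a translated algebraic sub-torus of $T$, and by irreducibility $Z$ coincides globally with this sub-torus.

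The main obstacle is the rigidity step in the second paragraph: proving that sub-MHS prime ideals in $\widehat{\Sym}(H^1(X,\CC)^*)$ are linearly generated. This should follow from a weight argument combined with the regularity of the quotient, but requires a careful orchestration of the symmetric algebra structure, the weight filtration, and the $\QQ$-structure. The minor technical point of primary decomposition in the MHS category is also non-trivial but is expected to be standard.
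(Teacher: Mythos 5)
Your reduction to a local statement at $\rho$ and the identification $\Ohat_\rho\cong\widehat{\Sym}(H^1(X,\CC)^*)$ with its induced MHS are fine in rank one, but the central ``rigidity'' lemma of your second paragraph is false: a prime ideal of $\widehat{\Sym}(H^1(X,\CC)^*)$ which is a sub-MHS need not be generated by linear forms. Take $X=(\CC^*)^3$, so that $H^1(X,\QQ)=\QQ(-1)^3$ and $\Ohat_\rho=\CC[[u,v,w]]$ with $u,v,w$ of type $(-1,-1)$; then every $\QQ$-rational homogeneous ideal is a sub-MHS (the ring is a split sum of Tate twists), in particular the prime ideal $(uw-v^2)$, which is not linear. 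The same phenomenon occurs in the pure weight-one compact case whenever $\Sym^2 H^1$ contains a rational Hodge class. The weight argument you invoke can at best force homogeneity --- and for non-compact smooth $X$ not even that, since $\Sym^n(H^1(X,\CC)^*)$ has weights in $[-2n,-n]$, which overlap for different $n$ --- but homogeneous primes are not linear. So the MHS on $\Ohat_\rho/J_k^i(W)$ alone cannot yield the sub-torus property, and some genuinely global input is unavoidable. (Your first step, that minimal primes of a sub-MHS ideal are sub-MHS, is plausible via the Tannakian action permuting the finitely many minimal primes, but it is not where the difficulty lies.)

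The paper's argument diverges exactly here: the MHS, via the homotopy transfer compatible with the weight filtration, is used only to show that all but finitely many of the higher action maps $m_n(\omega,\dots,\omega,-)$ on $H(X,V_\rho\otimes W)$ vanish, because each argument $\omega\in H^1(X,\CC)$ has weight at least $1$, so these maps raise weights by at least $n-1$ while $H(M)$ has bounded weights. This finiteness makes the resonance variety $\mathcal{R}_k^i\subset H^1(X,\CC)$ a globally defined closed algebraic subscheme whose germ at $0$ is carried by $\exp$ isomorphically onto the germ of $\Sigma_k^i(W)$ at $\rho$. The sub-torus conclusion then comes from the exponential Ax--Lindemann theorem of Budur--Wang (as in \cite{BudurWangQuasi} and \cite[Theorem~6.1]{BudurRubio}): an irreducible algebraic subvariety of the torus whose formal germ is the exponential of the germ of a globally algebraic subvariety of the Lie algebra is a translated sub-torus. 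This transcendence input, using algebraicity on both sides globally, is what replaces your local rigidity claim. A secondary gap in your globalization: it is not known a priori that every irreducible component of $\Sigma_k^i(W)$ contains a point underlying a VMHS (torsion or unitary), so you cannot freely choose such a point on each component; the paper works at the given $\rho$ and lets Ax--Lindemann perform the local-to-global step.
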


\subsection{Deformation functors}
The starting point of the methods that we use is the theory of Goldman-Millson \cite{GoldmanMillson}. To a representation $\rho$ is associated a local system $V_\rho$ and an adjoint local system $\ad_\rho\subset\End(V_\rho)$; there is also an augmentation $\epsilon$ evaluating sections of $\ad_\rho$ to the fiber at $x\in X$. The theory states that one can compute $\Ohat_\rho$ from the data of the differential graded (DG) Lie algebra $L$ of differential forms over $X$ with values in $\ad_\rho$, associating to it a deformation functor $\Def(L,\epsilon)$ over local Artin algebras and showing that this is isomorphic to the deformation functor associated to $\Ohat_\rho$ which is $\Hom(\Ohat_\rho, -)$. Without taking into account $\epsilon$, this corresponds to considering conjugacy classes of representations and $\Def(L,\epsilon)$ coincides with the classical deformation functor $\Def(\Ker(\epsilon))$ associated to the DG Lie algebra $\Ker(\epsilon)$, but this is not the right point of view.

This was extended by Budur-Wang \cite{BudurWang} to deal with the jump ideals: letting $M$ be the complex of differential forms over $X$ with values in $V_\rho$, then $M$ is a module (in the differential graded sense) over $L$ and to this is associated for every $i,k\subset\ZZ$ a sub-deformation functor
\begin{equation}
\Def_k^i(L,M,\epsilon)\subset\Def(L,M,\epsilon)
\end{equation}
which determines $J_k^i\subset\Ohat_\rho$. For the relative jump ideals $J_k^i(W)\subset\Ohat_\rho$ defining locally the relative jump loci $\Sigma_k^i(W)$, the module $M$ is the differential forms with values in $V_\rho \otimes W$.

In the cases we want to deal with, then $H(L)$ and $H(M)$ carry mixed Hodge structures (cohomology of $X$ with coefficients in an admissible variation of mixed Hodge structure). This already makes plausible an interaction between this MHS and the deformation functors. However, this MHS only exists \emph{on cohomology} whereas the deformation functor $\Def(L,\epsilon)$ (resp.\ $\Def_k^i(L,M,\epsilon)$) is determined by the whole of $L$ \emph{up to quasi-isomorphism} (resp.\ $(L,M)$ up to quasi-isomorphism of pairs).

Trying to understand this problem has lead to the notion of $L_\infty$ algebras, or \emph{Lie algebras up to homotopy}. These generalize DG Lie algebras but are equipped with \emph{higher operations} such that the cohomology of any DG Lie algebra $L$ becomes a $L_\infty$ algebra and $L$ becomes quasi-isomorphic to $H(L)$ \emph{as $L_\infty$ algebras}. Such theorems are called \emph{homotopy transfer of structures}. Furthermore, the deformation functors can also be written for $L_\infty$ algebras, and thus computed with $H(L)$ instead of $L$. We motivated this intensively and used this in our two previous articles \cite{Lefevre2, Lefevre3}.

At the same time and independently appeared the article of Budur-Rubi\'o \cite{BudurRubio} developing the theory of $L_\infty$ modules over $L_\infty$ algebras, showing a homotopy transfer theorem for the pair $(L,M)$ and defining deformation functors in this situation. There, the motivation besides Hodge theory is to compute these functors in \emph{finite-dimensional} vector spaces: typically $L,M$ are algebras of differential forms, so are infinite-dimensional, but their cohomology is finite-dimensional.

\subsection{Combination} Thus, the present works starts as a natural desire to combine \cite{BudurRubio} ($L_\infty$ algebras and modules, no MHS, rank $1$) with \cite{Lefevre2} ($L_\infty$ algebras and MHS, any rank, but no module). It seems indeed that all the ingredients are present: we have a pair $(L,M)$ carrying MHS on cohomology, determining sub-functors $\Def_k^i(L,M,\epsilon) \subset \Def(L,\epsilon)$ and the jump ideals $J_k^i \subset \Ohat_\rho$, we know how to put a MHS on $\Ohat_\rho$ and we want to show that the $J_k^i$ are sub-MHS.

This is indeed what we do through \S~\ref{section:proof-main}, where we do not need any more to refer to geometry but only to the abstract algebraic data that we get:

\begin{theorem}[{\S~\ref{section:proof-main}}]
Let $(L,M)$ be a pair which is at the same time
\begin{itemize}
\item A pair where $L$ is a differential graded Lie algebra and $M$ is a module over $M$,
\item A pair of mixed Hodge complexes, carrying mixed Hodge structures on the cohomology groups.
\end{itemize}
Let furthermore $\epsilon$ be an augmentation of $L$, injective on $H^0(L)$.

Then the complete local algebra $R$ that pro-represents $\Def(L, \epsilon)$ has a mixed Hodge structure, and the ideals $J_k^i \subset R$ that correspond to the sub-functors $\Def_k^i(L,M,\epsilon)$ are sub-mixed Hodge structures.
\end{theorem}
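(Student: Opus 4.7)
The plan is to reduce the statement to a question about the cohomology $(H(L), H(M))$, where the mixed Hodge structure lives, via an $L_\infty$ homotopy transfer of structure that is compatible with the Hodge data, and then to read off the ideals $J_k^i$ from the transferred $L_\infty$-module structure.

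First I would apply the homotopy transfer theorem of Budur--Rubi\'o to the pair $(L,M)$: this yields an $L_\infty$-algebra structure on $H(L)$, an $L_\infty$-module structure on $H(M)$, and an $L_\infty$ quasi-isomorphism of pairs $(H(L), H(M)) \simeq (L, M)$. Since their framework shows that both $\Def(L,\epsilon)$ and its sub-functors $\Def_k^i(L, M, \epsilon)$ are invariants of such pairs up to $L_\infty$ quasi-isomorphism, one gets canonical identifications with $\Def(H(L), H(\epsilon))$ and $\Def_k^i(H(L), H(M), H(\epsilon))$. This reduces the geometric/analytic data to the finite-dimensional cohomological data where the MHS is defined.

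Second, and this is the delicate step, I would arrange the transfer so that the MHS is preserved. Using the mixed Hodge complex structure on $(L,M)$, one can choose a Deligne-style bigraded splitting that produces a contraction of $(L,M)$ onto $(H(L), H(M))$ whose projection, inclusion, and homotopy are all morphisms of mixed Hodge complexes (on the $\Dec W$ filtration). Plugging such a contraction into the explicit sum-over-trees formulas for the transferred structure shows, exactly as for the algebra case in Lef\`evre2, that each higher bracket $\ell_n$ on $H(L)$ and each transferred action $\ell_n^M$ on $H(M)$ is a morphism of MHS of the appropriate bidegree. The pro-representing algebra $R$ of $\Def(H(L), H(\epsilon))$ is then constructed, as in Lef\`evre2, from a Chevalley--Eilenberg-type dual of $H(L)$ cut out by the $\MC$-equation and quotiented by the gauge action; the injectivity of $\epsilon$ on $H^0(L)$ is what allows this quotient to be taken in a MHS-compatible way, producing the MHS on $R$.

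Third, for the jump ideals themselves, I would use the description of the sub-functor $\Def_k^i(H(L), H(M), H(\epsilon))$ coming from Budur--Rubi\'o: it is cut out by rank conditions on an explicit twisted differential $d_\xi$ on $H(M)$ built from the universal Maurer--Cartan element $\xi$ over $R$ via the $L_\infty$-module operations $\ell_n^M$. Concretely, $J_k^i$ is generated by the minors of the appropriate size of this twisted differential in degree $i$. Since $\xi$ is universal, the $\ell_n^M$ are MHS morphisms, and the Hodge and weight filtrations are compatible with taking minors in the relevant tensor products, the ideal they generate is automatically a sub-MHS of $R$, giving the theorem.

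The main obstacle I anticipate is the second step: making the homotopy transfer simultaneously Hodge-compatible for the algebra $L$ and for the module $M$, in such a way that all the transferred module operations are MHS morphisms and the universal $\xi$ lives in a sub-MHS. The algebra side is already handled in Lef\`evre2 via bigraded splittings, but one must verify that the \emph{same} splitting extends to $M$ coherently with the DG-module action, so that the tree formulas for $\ell_n^M$ involve only MHS-compatible pieces; the rest of the argument then propagates this compatibility through $\MC$, the gauge quotient, and the minors defining $J_k^i$.
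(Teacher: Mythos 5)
Your overall strategy --- transfer the $L_\infty$ structure to the cohomology pair compatibly with the Hodge data, then read off $R$ and the ideals $J_k^i$ by finite-dimensional linear algebra --- is the same as the paper's, but two of your steps hide genuine gaps. First, you transfer on $(L,M)$ itself and propose to build $R$ by quotienting by the gauge action ``in a MHS-compatible way,'' invoking the injectivity of $\epsilon$ on $H^0(L)$ only at that point. The paper instead replaces $(L,\epsilon)$ from the start by the Fiorenza--Manetti $L_\infty$ cone $C$ of $\epsilon$, extended in \S~\ref{section:cone-module} to act on $M$ so that $\Def_k^i(C,M)=\Def_k^i(L,M,\epsilon)$. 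This is not cosmetic: $H^0(C)=0$ is what makes the homotopy relation among Maurer--Cartan elements trivial, so that $\Def(C)(A)=\MC(C)(A)$ and $R$ is obtained directly as the dual of $H^0(\mathscr{C}(H(C)))$ with no quotient to perform; it is also the hypothesis needed for the minimal-model step of the Hodge-compatible transfer. Without the cone you must put a MHS on a genuine gauge quotient, which is exactly the difficulty the construction is designed to circumvent. Forming the cone in turn forces the passage from mixed Hodge complexes to absolute Hodge complexes via $\Dec$, since the cone and the degree-shifting operations $\ell_n$ do not preserve the unshifted weight filtration of a MHC; you mention $\Dec W$ in passing but do not engage with why it is needed.

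Second, and more seriously, your claim that a ``Deligne-style bigraded splitting'' yields a contraction of $(L,M)$ onto $(H(L),H(M))$ whose projection, inclusion and homotopy are all morphisms of mixed Hodge complexes is precisely the hard point, and it fails in the naive form you state. A MHC consists of two filtered complexes, one over $\kk$ and one over $\CC$, linked only by a zig-zag of filtered quasi-isomorphisms; the filtration $F$ lives only on the $\CC$ component, the differential is not strict for $W$ at the cochain level, and Deligne splittings exist for MHS on cohomology, not for the complexes themselves. The paper explicitly states that it could not make such a direct approach work and instead relies on the recent transfer theorem of Cirici--Sopena, which proceeds by first constructing a Morgan-style minimal model carrying its own MHS (again requiring $H^0=0$, i.e.\ the cone) and only then transferring to cohomology; the module is handled by running this on the $L_\infty$ algebra $\mathscr{C}\oplus\mathscr{M}$. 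Your final step on the minors is essentially right in spirit, but ``the filtrations are compatible with taking minors'' is not an argument, since minors are not linear operations; the paper's Tannakian formulation --- the universal Maurer--Cartan element is a $\mathscr{G}$-fixed point, hence $d_u$ is $\mathscr{G}$-equivariant, and the ideal of minors is independent of the chosen bases, hence $\mathscr{G}$-stable --- is the clean way to close that step.
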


However there are several strong difficulties that appear \emph{already in the compact case} where we can since the beginning replace the pair $(L,M)$ up to quasi-isomorphism by its cohomology, and that we could not overpass until recently:
\begin{enumerate}
\item We want that $L$ be at the same time a mixed Hodge complex, carrying the filtrations inducing a MHS on its cohomology, and a DG Lie algebra at the level of cochain complex. This is not so trivial, the ideas come from rational homotopy theory starting with the work of Morgan \cite{Morgan} for the analogous problem with commutative DG algebras. We largely discussed this in \cite{Lefevre3} and can now use it directly in~\S~\ref{section:geometry} to deal with the additional data of a module over~$L$.

\item We have to deal with the functor $\Def(L,\epsilon)$, which is not exactly the same as the classical and much more studied $\Def(L)$. In \cite{Lefevre2} we claimed that the right point of view is to use a structure of $L_\infty$ algebra on the mapping cone of $\epsilon$; this was indeed our initial motivation for using $L_\infty$ algebras. In \S~\ref{section:cone-module} we now show a similar result that allows us to deal with the functors $\Def_k^i(L,M,\epsilon)$.

\item Many classical constructions of MHS go through constructing an appropriate mixed Hodge complex and showing that the object we want to put a MHS on comes from the cohomology of it. This is what we did in \cite{Lefevre2} for the above $R$, inspired notably by the theory of Hain \cite{Hain}. We could not follow this approach for the ideals $J_k^i$ and take here a completely different one. Instead, we show that we can can do a homotopy transfer of structure to replace $(L,M)$ by its cohomology such that it becomes quasi-isomorphic to it taking into account the MHS. This is possible thanks to a recent article of Cirici-Sopena \cite{CiriciSopena}.
\end{enumerate}

The last step requires again some care to deal with the augmentation. However, in the end what we get looks like a lot of  combinatorial and algebraic structures, carrying filtrations that define a MHS, but still is linear algebra in finite-dimensional vector spaces. From there, putting the MHS on the above $R$ and $J_k^i\subset R$ is not particularly difficult. This is the reason why we try to keep this article short: we recall the algebraic formulas that we need, but do not digress too much on the geometry since don't have much to add there, and then we come to the main proof.

\subsection{Acknowledgments}
I thank particularly Joana Cirici for discussions about her homotopy transfer theorem and Leonardo A.\ Lerer for discussions about the jump loci; both were essential to complete this work. I also thank Nero Budur and Marcel Rubi\'o for earlier discussions introducing me to this question.

\section{Formulas}
Here we recall what we need about deformation functors, DG Lie algebras, and $L_\infty$ algebras. We work over a fixed base field $\kk$ of characteristic zero and our deformations functors are always functors form the category $\Art$ of local Artin $\kk$-algebras (objects are denoted by $(A,\mathfrak{m}_A)$ where $\mathfrak{m}_A$ is the maximal ideal, and we assume that the residue field is $\kk$) to the category of sets.

\subsection{Cohomology jump ideals} See~\cite[\S~2]{BudurWang}.
Let $R$ be a ring and let $M$ be a complex of $R$-modules. We assume that $R$ is bounded-above and that its cohomology in each degree is finitely generated. Then there exists a bounded-above complex $F$ of finitely generated free $R$-modules with a quasi-isomorphism $F\rightarrow M$. The \emph{cohomology jump ideals} are ideals $J^i_k(M)\subset R$ depending on the two integers $i,k$ constructed as follows: choose bases for $F^i$ in each degree $i$ and write the matrix for the differential $d^i$ with coefficients in $R$, then $J^i_k(M)$ is the ideal generated by minors of size $\rank(F^i)-k+i$ of the matrix of
\begin{equation}
d^{i-1}\oplus d^i : F^{i-1} \oplus F^i \longrightarrow F^i \oplus F^{i+1} .
\end{equation}
These do not depend on the bases nor on the choice of $F$.

When $R$ is a field, then $J^i_k(M)=0$ if and only if $\dim(H^i(M))\geq k$.

\subsection{For DG Lie algebras} See~\cite[\S~3]{BudurWang}.
Let $L$ be a DG Lie algebra. It is equipped with a grading $L=\bigoplus_{i\in\ZZ} L^i$, a differential $d:L^i\rightarrow L^{i+1}$, and a bracket $[-,-]: L^i \otimes L^j \rightarrow L^{i+j}$ which is anti-symmetric, satisfies the graded Jacobi identity, and for which the differential is a derivation. To $L$ is associated a deformation functor $\Def(L)$ as follows. First there is the \emph{Maurer-Cartan functor} with for $(A,\mathfrak{m}_A)\in \Art$
\begin{equation}
\MC(L)(A) := \left\{ \omega\in L^1\otimes\mathfrak{m}_A\ \left|\
    d(\omega)+\frac{1}{2}[\omega,\omega] = 0 \right.\right\} .
\end{equation}
The nilpotent Lie algebra $L^0 \otimes \mathfrak{m}_A$ has a group structure denoted by
\begin{equation}
(\exp(L^0\otimes\mathfrak{m}_A), *)
\end{equation}
which acts on $\MC(L)(A)$ (the \emph{gauge action}). The deformation functor is the corresponding quotient:
\begin{equation}
\Def(L)(A) := \left\{ \omega\in L^1\otimes\mathfrak{m}_A\ \left|\
d(\omega)+\frac{1}{2}[\omega,\omega] = 0 \right.\right\} / \exp(L^0\otimes \mathfrak{m}_A) .
\end{equation}
The fundamental theorem is that $\Def(L)$ is invariant under a quasi-isomorphism of~$L$.    

Now let $M$ be a module over $L$, so we call $(L,M)$ a DG Lie pair. It has a grading $M=\bigoplus_{i\in\ZZ} M^i$, a differential $d:M^i \rightarrow M^{i+1}$ and an action map $L^i \otimes M^j \rightarrow M^{i+j}$. We need to assume that $M$ is bounded above and has finite-dimensional cohomology in each degree. Then to $(L,M)$ and to any pair of integers $i,k$ is associated a \emph{deformation functor with constraints} $\Def_k^i(L,M)$ which is a sub-functor of $\Def(L)$. It is constructed as follows: for $(A,\mathfrak{m}_A)\in \Art$, an element $\omega\in \MC(L)(A)$ defines a differential $d_\omega$ on $M\otimes A$ by
\begin{equation}
d_\omega(-) := {d(-)}\otimes{\id_A} + \omega\otimes -
\end{equation}
using the action of $\omega\in L^1\otimes \mathfrak{m}_A$ on $M\otimes A$.
The \emph{cohomology jump ideals} of $M\otimes A$ with respect to this $d_\omega$
\begin{equation}
J_k^i(M\otimes A, d_\omega) \subset A
\end{equation}
are then canonically defined ideals of $A$ and
\begin{equation}
\Def_k^i(L,M)(A) :=  \left\{ \omega\in L^1\otimes\mathfrak{m}_A\ \left|\
    d(\omega)+\frac{1}{2}[\omega,\omega] = 0,\ J_k^i(M\otimes A, d_\omega)=0 \right.\right\} / \exp(L^0\otimes \mathfrak{m}_A) .
\end{equation}
These sub-functors are invariant under a quasi-isomorphism of the pair $(L,M)$.

\subsection{With augmentations} See~\cite[\S~2]{Lefevre2} and~\cite[\S~5]{BudurWang}.
In practice, our main interest is not directly the functor $\Def(L)$ but a small modification of it taking into account an augmentation $\epsilon:L\rightarrow \mathfrak{g}$ to a Lie algebra (finite-dimensional, seen as a DG Lie algebra concentrated in degree zero). This is this functor that, in our problem, controls the deformation theory of representations of the fundamental group, seen also as the deformation theory of local systems with a fixed framing at the base point.

So let $\epsilon:L\rightarrow \mathfrak{g}$ be an augmentation of $L$. The corresponding \emph{augmented deformation functor} is defined as: for $(A,\mathfrak{m}_A)\in\Art$
\begin{equation}
\Def(L,\epsilon)(A) :=  \left\{ (\omega,e^\alpha) \in (L^1\otimes\mathfrak{m}_A)
  \times \exp(\mathfrak{g}\otimes\mathfrak{m}_A)\ \left|\
    d(\omega)+\frac{1}{2}[\omega,\omega]=0 \right.\right\}/\exp(L^0 \otimes\mathfrak{m}_A)
\end{equation}
where $e^\lambda \in \exp(L^0\otimes \mathfrak{m}_A)$ acts via
\begin{equation}
e^\lambda \cdot (\omega, e^\alpha) := (e^\lambda \cdot x, e^\alpha*e^{-\epsilon(\lambda)}) .
\end{equation}
Similarly one defines for an \emph{augmented DG Lie pair} $(L,M,\epsilon)$ (a pair $(L,M)$ and an augmentation of $L$)
\begin{multline}
\Def^i_k(L,M,\epsilon)(A) := \bigg\{ (\omega,e^\alpha) \in (L^1\otimes\mathfrak{m}_A)
\times \exp(\mathfrak{g}\otimes\mathfrak{m}_A)\ \bigg| \\ 
d(\omega)+\frac{1}{2}[\omega,\omega]=0,
\ J_k^i(M\otimes A, d_\omega)=0\bigg\}/\exp(L^0 \otimes\mathfrak{m}_A).
\end{multline}
The functor $\Def(L,\epsilon)$ (resp.\ $\Def^i_k(L,M,\epsilon)$) is invariant under a quasi-isomorphism of $L$ (resp.\ of the pair $(L,M)$) that commutes with the augmentation to $\mathfrak{g}$.

\subsection{For \texorpdfstring{$L_\infty$}{L-infinity} algebras}
See~\cite{BudurRubio}.
An $L_\infty$ algebra is given by a graded vector space $C=\bigoplus_{i\in\ZZ} C^i$ and sequence of anti-symmetric linear maps $\ell_n:C^{\otimes n} \rightarrow C$ for each $n\geq 1$, where $\ell_n$ is of degree $2-n$, satisfying an infinite list of axioms. The first of these states that $\ell_1\circ\ell_1=0$ (so $\ell_1$ can be identified with a differential $d$), the second one that $\ell_1$ is a derivation for $\ell_2$.

It is more practical to encode these axioms as a codifferential on a coalgebra. Let $\mathscr{C}(C)$ by $\Sym(C[1])$ with its graded cofree cocommutative coalgebra structure; the operations $\ell_n$ correspond to components
\begin{equation}
q_n:\Sym^n(C[1])\rightarrow C[1]
\end{equation}
of a coderivation $Q$ (of degree $1$) on the coalgebra $\mathscr{C}(C)$, and the axioms are all encoded in the relation $Q\circ Q=0$. We then think of $\mathscr{C}$ as a functor from $L_\infty$ algebras to the category of DG coalgebras (not \emph{all} such coalgebras: the ones that are dual to complete augmented algebras).

Then the Maurer-Cartan functor of $C$ is given by: for $(A,\mathfrak{m}_A)\in \Art$
\begin{equation}
\MC(C)(A) := \Hom_{{\mathbf{CoAlg}}}(A^*, \mathscr{C}(C)) 
= \left\{ \omega \in C^1\otimes\mathfrak{m}_A\ \left|\  \sum_{n\geq 1} \frac{\ell_n(\omega,\dots,\omega)}{n!}=0 \right.\right\}
\end{equation}
where the $\Hom$ is taken in the appropriate category of DG coalgebras.
The deformation functor of $L$ is simply the quotient by homotopy:
\begin{equation}
\Def(C)(A) :=  \MC(C)(A)/\sim
\end{equation}
with $\omega_0\sim\omega_1$ if and only if there exists an element
\begin{equation}
\omega\in \MC(C)(A\otimes \kk[t,dt])
\end{equation}
(where $\kk[t,dt]$ is the algebra of polynomial differential forms on $\kk$) with $\omega(t=0)=\omega_0$ and $\omega(t=1)=\omega_1$. This is known to coincide with the previously defined $\Def(C)$ if $C$ is a DG Lie algebras (that is, $\ell_n=0$ for $n>3$): we see that the Maurer-Cartan elements are the same, and two Maurer-Cartan elements are homotopically equivalent if and only if they are gauge equivalent.

There is also a notion of $L_\infty$ module over $C$. Such a module $M$ is by definition equipped with a grading $M=\bigoplus_{i\in\ZZ} M^i$ and \emph{module action maps}
\begin{equation}
m_n:C^{\otimes (n-1)} \otimes M \longrightarrow M, \quad n\geq 1
\end{equation}
where $m_n$ has degree $2-n$, satisfying an infinite list of axioms; the first one states that $m_1 \circ m_1=0$ (so $m_1$ can be identified with a differential on $M$). The pair $(C,M)$ is called a \emph{$L_\infty$ pair}.

For $A\in \Art$, a Maurer-Cartan element $\omega\in \MC(C)(A)$ defines a differential $d_\omega$ on $M\otimes A$ by
\begin{equation}
\label{equation:derivation-M}
d_\omega(-):=\sum_{n\geq 0}\frac{1}{n!}(m_{n+1}\otimes\id_A)(\omega^{\otimes n} \otimes -)
\end{equation}
and defines, as always under the hypothesis that $M$ is bounded-above and its cohomology is finite-dimensional in each degree, jump ideals $J_k^i(M\otimes A, d_\omega)$ with respect to $d_\omega$. Then there are sub-functors of $\Def(C)$
\begin{equation}
\Def_k^i(C,M)(A) := \left\{ \omega \in \MC(C)(A) \left|\
J_k^i(M\otimes A, d_\omega)=0 \right.\right\}/\sim 
\end{equation}
where $\sim$ is again homotopy equivalence.

For $L_\infty$ algebras there is a notion of $L_\infty$ (or \emph{weak}) morphism; it can be defined as a morphism of DG coalgebras $\mathscr{C}(C)\rightarrow \mathscr{C}(C')$. It is called \emph{weak equivalence} if its linear component, the morphism of complexes $C\rightarrow C'$, is a quasi-isomorphism. Similarly there is a notion of weak morphisms and weak equivalences of $L_\infty$ pairs. The \emph{homotopy transfer theorem} states that any $L_\infty$ algebra is weakly equivalent to its cohomology which receives a \emph{transferred} $L_\infty$ structure, and similarly a $L_\infty$ pair becomes weakly equivalent to its cohomology pair with a transferred $L_\infty$ structure. The fundamental theorem is that the functors $\Def(C)$, $\Def_k^i(C,M)$ are invariant under weak equivalences.

\subsection{From \texorpdfstring{$L_\infty$}{L-infinity} pairs to \texorpdfstring{$L_\infty$}{L-infinity} algebras}
\label{section:L-infinity-pair-sum}
Finally we will need one practical construction to deduce theorems for $L_\infty$ pairs from known theorems for $L_\infty$ algebras, described in \cite[\S~3.3--3.4]{BudurRubio}). From $(C,M)$ one can put a $L_\infty$ structure $(j_n)$ on the graded vector space $C\oplus M$, with for $(a_1,\xi_1),\dots,(a_n,\xi_n)\in C\oplus M$
\begin{equation}
\label{equation:L-infinity-pair-sum}
j_n((a_1,\xi_1),\dots,(a_n,\xi_n)) :=  \left(
\ell_n(a_1,\dots,a_n), \sum_{i=1}^n (-1)^{n-i+|\xi_i|\sum_{k=i+1}^n |a_k|} m_n(a_1, \dots, \hat{a}_i, \dots,a_n,\xi)
\right).
\end{equation}
The interest of this construction is that given the complexes $C,M$ and this $L_\infty$ structure on $C\oplus M$ one can recover the $L_\infty$ structure of $C$ and the module action maps; furthermore a morphism of $L_\infty$ pairs $(C,M)\rightarrow (C',M')$ induces a morphism of $L_\infty$ algebras $C\oplus M\rightarrow C'\oplus M'$ from which one can recover the morphism of $L_\infty$ pairs, and it is a weak equivalence of $L_\infty$ algebras if and only if the morphism of pairs is a weak equivalence.

This is actually how Budur-Rubi\'o deduce their theorem of transfer of structure for $L_\infty$ pairs from the one for $L_\infty$ algebras.

\subsection{The cone}
\label{section:the-cone}
See~\cite[\S~5]{Lefevre2}. Given an augmented DG Lie algebra $\epsilon:L\rightarrow \mathfrak{g}$, Fiorenza and Manetti defined in~\cite{FiorenzaManetti} a canonical structure of $L_\infty$ algebra on the shifted mapping cone $C$ of $\epsilon$, which has $C^i=L^i$ for $i\neq 1$ and $C^1=L^1\oplus \mathfrak{g}$, with differential induced by the one of $L$ and
\begin{equation}
d^0_C(a):=(d_L^0(a), \epsilon(a)).
\end{equation}
We will call this simply the \emph{$L_\infty$ cone}. The crucial remark is that for this structure there is an equality between the deformation functors
\begin{equation}
\Def(L, \epsilon) = \Def(C)
\end{equation}
where the left one is defined via gauge transformations and the right one via a $L_\infty$ structure.

Our goal in \S~\ref{section:cone-module} will be to generalize this to the deformation functors with constraints, so we need the formulas.

The operation $\ell_2$ is induced by the Lie bracket and we need only to describe its component $C^0\otimes C^1 \rightarrow C^1$: it is
\begin{equation}
\ell_2(a,(b,v)) := \left([a,b],\ \frac{1}{2}\,[\epsilon(a),v]\right), \quad a\in L^0,b\in L^1, v\in\mathfrak{g} .
\end{equation}
Finally there is a non-trivial operation $\ell_n$ for $n\geq 3$. Applied to $n$ elements of $\mathfrak{g}$ and $k>1$ elements of $L$, $\ell_{n+k}$ is zero. It is non-trivial only when applied to $n$ elements $u_1,\dots,u_r$ of $\mathfrak{g}$ and one element $a$ of $L^0$, with values in $\mathfrak{g}\subset C^1$, given by a formula
\begin{equation}
\ell_{n+1}(u_1,\dots,u_n,a)=B_n\sum_\tau \pm [u_{\tau(n)}, [u_{\tau(2)},\dots,
[u_{\tau(n)}, \epsilon(a)] \dots ]]
\end{equation}
where the sum is over the symmetric group and with a constant $B_n$ and unimportant (for us) signs in the sum.

\section{Pro-representability of deformation functors}
\label{section:pro-representability}

We want to recall precisely how and when the deformation functors are pro-representable. Recall that the functor $F$ on $\Art$ is said to be pro-representable if there exists an element $R\in \widehat{\Art}$ such that $F$ is isomorphic to
\begin{equation}
A\in\Art \longmapsto \Hom_{\widehat{\Art}}(R,A).
\end{equation}
This $R$ is then unique up to isomorphism.

In this section, we work with a $L_\infty$ algebra $C$, again over any field $\kk$ of characteristic zero.

\begin{lemma}
\label{lemma:homotopy-trivial}
Let $C$ be a $L_\infty$ algebra. Assume that $C^{< 0}=0$ and $C^0=0$. Then the homotopy relation among Maurer-Cartan elements of $C$ is trivial.
\end{lemma}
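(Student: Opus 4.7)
The plan is to unpack the homotopy relation by analyzing the bigrading coming from $\kk[t,dt]$ and to use the vanishing of $C^0$ to eliminate the $dt$-component of a homotopy, which then forces the homotopy to be constant in $t$.

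More precisely, a homotopy between $\omega_0,\omega_1\in\MC(C)(A)$ is an element
\[
\omega\in \MC(C)(A\otimes \kk[t,dt]) \subset (C\otimes \kk[t,dt])^1\otimes \mathfrak{m}_A
\]
with $\omega(0)=\omega_0$ and $\omega(1)=\omega_1$, where the $L_\infty$ structure on $C\otimes \kk[t,dt]$ is obtained by extending $\ell_1$ by the de Rham differential $d_{\mathrm{dR}}$ of $\kk[t,dt]$ and extending $\ell_n$ ($n\geq 2$) $\kk[t,dt]$-multilinearly. The first step is to note that the total degree $1$ part decomposes, according to the grading $|t|=0$, $|dt|=1$, as
\[
(C\otimes \kk[t,dt])^1 = \bigl(C^1\otimes \kk[t]\bigr) \oplus \bigl(C^0\otimes \kk[t]\,dt\bigr) .
\]
Under the assumption $C^0=0$, the second summand vanishes, so $\omega=\omega(t)\in C^1\otimes \kk[t]\otimes \mathfrak{m}_A$.

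Next I would split the Maurer--Cartan equation by its $dt$-degree. Since each factor $\omega(t)$ has no $dt$-component and $\ell_n$ is $\kk[t,dt]$-multilinear, the higher brackets $\ell_n(\omega(t),\dots,\omega(t))$ are all $dt$-free for $n\geq 2$. Similarly $\ell_1(\omega(t))$ has no $dt$-component, while $d_{\mathrm{dR}}\omega(t)=\frac{d\omega(t)}{dt}\,dt$ is purely of $dt$-degree $1$. The Maurer--Cartan equation
\[
\ell_1(\omega(t))+\frac{d\omega(t)}{dt}\,dt+\sum_{n\geq 2}\frac{1}{n!}\,\ell_n(\omega(t),\dots,\omega(t))=0
\]
therefore separates into a $dt$-free part (saying $\omega(t)$ is a Maurer--Cartan element of $C\otimes \mathfrak{m}_A$ for every value of $t$) and a $dt$-part which reduces to
\[
\frac{d\omega(t)}{dt}=0 .
\]
Hence $\omega(t)$ is constant in $t$, and evaluation at $t=0$ and $t=1$ gives $\omega_0=\omega_1$.

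The only point requiring some care is checking that the extension of the $L_\infty$ structure to $C\otimes \kk[t,dt]$ is indeed the one for which $\MC(C)(A\otimes \kk[t,dt])$ coincides with the set of MC elements of the new $L_\infty$ algebra, and in particular that the de Rham differential of $\kk[t,dt]$ contributes to $\ell_1$ as written above; this is standard and can be taken from the definition of the homotopy relation recalled in the excerpt. Apart from that, the argument is purely a degree count, and the hypothesis $C^{<0}=0$ is not used directly here beyond ensuring there are no further components in degree $1$ of $C\otimes \kk[t,dt]$ to worry about.
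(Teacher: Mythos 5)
Your argument is correct and is essentially the paper's own proof: both write the homotopy as $\gamma(t)+a(t)\,dt$, use $C^0=0$ to kill the $dt$-component, and then read off from the $dt$-part of the Maurer--Cartan equation that $\partial\gamma/\partial t=0$, so $\gamma$ is a constant Maurer--Cartan element of $C\otimes\mathfrak{m}_A$. The extra care you take in justifying the decomposition by $dt$-degree and the form of the extended $\ell_1$ is welcome but does not change the route.
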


\begin{proof}
Indeed, for $(A,\mathfrak{m}_A)\in \Art$ take a Maurer-Cartan element
\begin{equation}
\omega\in\MC(C)(A \otimes \kk[t,dt])
\end{equation}
and write it as $\omega=\gamma(t)+a(t)dt$ where $a(t)$ is a polynomial with values in $C^0\otimes\mathfrak{m}_A$ and $\gamma(t)$ is a polynomial with values in $C^1\otimes\mathfrak{m}_A$. Here $a=0$, and then the Maurer-Cartan equation translates into
\begin{equation}
\left( d(\gamma(t)) - \frac{\partial\gamma}{\partial t}dt \right) + \sum_{n\geq 2}\frac{\ell_n(\gamma(t),\dots,\gamma(t))}{n!}
\end{equation}
which tells us that $\gamma(t)$ is constant and is a Maurer-Cartan element of $C\otimes \mathfrak{m}_A$.
\end{proof}

Under the previous hypothesis, it is easy to pro-represent the functor $\Def(C)$ and functorially in $C$.

\begin{corollary}
\label{corallary:pro-representability}
If furthermore $C^1$ is finite-dimensional, then $\Def(C)$ is pro-representable. If $M$ is an $L_\infty$ module over $C$, bounded above and finite-dimensional in each degree, then $\Def_k^i(C,M)$ is pro-representable.
\end{corollary}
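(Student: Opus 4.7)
First I would exploit Lemma~\ref{lemma:homotopy-trivial}: under the hypothesis $C^{<0}=0$, $C^0=0$, we have the identification $\Def(C)(A)=\MC(C)(A)$ for every $A\in\Art$, so the problem reduces to pro-representing the Maurer--Cartan functor. Since $C^1$ is finite-dimensional, set
\begin{equation}
S := \widehat{\Sym}\bigl((C^1)^*\bigr),
\end{equation}
which is a complete local Noetherian $\kk$-algebra in $\widehat{\Art}$. A choice of basis $(e_i)$ of $C^1$ with dual basis $(x_i)$ of $(C^1)^*\subset \mathfrak{m}_S$ produces a universal element $\omega_{\mathrm{univ}} := \sum_i x_i\otimes e_i$ having the tautological property that $\Hom_{\widehat{\Art}}(S,A)\cong C^1\otimes\mathfrak{m}_A$ via $\phi\mapsto (\phi\otimes\id)(\omega_{\mathrm{univ}})$.

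Next I would encode the Maurer--Cartan equation as an ideal of $S$. For every $\alpha\in (C^2)^*$, the series
\begin{equation}
\sum_{n\geq 1}\frac{1}{n!}\,\alpha\bigl(\ell_n(\omega_{\mathrm{univ}},\dots,\omega_{\mathrm{univ}})\bigr)
\end{equation}
is a well-defined element of $S$ (its $n$-th term lies in $\mathfrak{m}_S^n$, so the sum converges in the $\mathfrak{m}_S$-adic topology, even if $C^2$ is infinite-dimensional). Let $I\subset S$ be the closed ideal generated by these elements as $\alpha$ varies, and set $R:=S/I$; being a quotient of $S$, it still belongs to $\widehat{\Art}$. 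By construction, a morphism $\phi:S\to A$ factors through $R$ if and only if $\sum_n \ell_n(\phi(\omega_{\mathrm{univ}}),\dots)/n!=0$ in $C^2\otimes\mathfrak{m}_A$, which gives $\Hom_{\widehat{\Art}}(R,A)=\MC(C)(A)=\Def(C)(A)$.

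For the module case, the hypotheses on $M$ ensure that $M\hat\otimes R$ is a bounded-above complex of finitely generated free $R$-modules, with differential $d_{\omega_{\mathrm{univ}}}$ defined by formula~\eqref{equation:derivation-M} applied to the universal element (again the sum converges in the $\mathfrak{m}_R$-adic topology). This lets us form the cohomology jump ideal
\begin{equation}
J := J_k^i(M\hat\otimes R, d_{\omega_{\mathrm{univ}}}) \subset R .
\end{equation}
Because the jump ideals are computed from minors of matrices representing the differential in free bases, they are compatible with arbitrary base change: for any $\phi:R\to A$, the image $\phi(J)\cdot A$ coincides with $J_k^i(M\otimes A, d_\omega)$ where $\omega:=(\phi\otimes\id)(\omega_{\mathrm{univ}})$. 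Consequently the condition defining $\Def_k^i(C,M)(A)$ translates exactly into $\phi$ factoring through $R/J$, so $R/J$ pro-represents $\Def_k^i(C,M)$.

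\textbf{Main obstacle.} The only delicate points are of an analytic/formal nature: interpreting the infinite sums $\sum_n \ell_n(\omega_{\mathrm{univ}},\dots)/n!$ and $\sum_n m_{n+1}(\omega_{\mathrm{univ}}^{\otimes n}\otimes -)/n!$ when $C^2$ or the $M^i$ need only satisfy partial finiteness conditions, and verifying the base-change formula for jump ideals at the level of the complete local ring $R$ rather than over a field. Both are handled by working order-by-order in $\mathfrak{m}_R$ (where everything reduces to a finite computation) and by Noetherianity of $S$, which guarantees that the closed ideal $I$ is finitely generated and behaves well under quotient.
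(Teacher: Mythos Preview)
Your proposal is correct and matches the paper's approach: the paper likewise reduces to $\MC(C)$ via Lemma~\ref{lemma:homotopy-trivial}, realizes $R$ as the quotient of $\widehat{\Sym}((C^1)^*)$ by the Maurer--Cartan power series, and then pro-represents $\Def_k^i(C,M)$ by $R/J_k^i(M\otimes R, d_{\omega_u})$, working order-by-order in $R_n=R/\mathfrak{m}_R^n$. The only addition in the paper is the parallel description $R=H^0(\mathscr{C}(C))^*$ via the coalgebra $\mathscr{C}(C)$, which is convenient for functoriality in $C$ but not needed for pro-representability itself.
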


Let us explain how this works. For $A\in \Art$, because of the previous lemma,
\begin{equation}
\Def(C)(A)=\MC(C)(A)=\Hom_{\mathbf{CoAlg}}(A^*, \mathscr{C}(C))
\end{equation}
but since $C$ is graded only in degree $\geq 1$ then $\mathscr{C}(C)$ is non-negatively graded and this is also
\begin{equation}
\Def(C)(A)=\Hom_{\mathbf{CoAlg}}(A^*, H^0(\mathscr{C}(C))) .
\end{equation}
Here $H^0(\mathscr{C}(C))$ has a canonical increasing filtration by finite-dimensional sub-coalgebras, coming from the canonical filtration of $\mathscr{C}(C)$ by the number of tensor powers, which dualizes to a tower of finite-dimensional quotients of a pro-Artin algebra. Hence we can dualize and write
\begin{equation}
\Def(C)(A)=\Hom_{\widehat{\Art}}(R, A), \quad R=H^0(\mathscr{C}(C))^* .
\end{equation}

In a more concrete way, the Maurer-Cartan equation gives formal power series equations in the finite-dimensional vector space $C^1$ with values in $C^2$. This defines the complete local algebra $R$ with finite-dimensional tangent space (i.e.\ $R$ is pro-Artin) as a quotient of the completed symmetric algebra on $(C^1)^*$.

Once constructed $R$, there is a \emph{universal Maurer-Cartan element} $\omega_u \in \MC(C)(R)$. It has the universal property that for any $A\in\Art$, then any $\omega\in \Def(C)(A)$ is induced by $\omega_u$ via a unique map $R\rightarrow A$ in $\widehat{\Art}$. With this one can form the \emph{universal derivation} $d_u:=d_{\omega_u}$ on the complex of free finitely generated $R$-modules $M\otimes R$ and then the jump ideals $J_k^i(M\otimes R, d_{u}) \subset R$. To do this properly one actually has to truncate everything to order $n$ and work with the ring $R_n:=R/\mathfrak{m}_R^n$, which represents the restriction of $\Def(C)$ to those Artin algebras $A$ with $\mathfrak{m}_A^n=0$, and the reduction modulo ${\mathfrak{m}_R^n}$ of $\omega_u$ is the universal Maurer-Cartan element for this. The ideal $J_k^i(M\otimes R, d_{u})\subset R$ is then the projective limit of the $J_k^i(M\otimes R_n, d_{u})\subset R_n$.

Then $\Def_k^i(C,M)$ is pro-represented by the algebra
\begin{equation}
R / J_k^i(M\otimes R, d_{u}) .
\end{equation}

The functoriality in $C$ of this construction appears clearly, because a morphism (even a weak morphism) of $L_\infty$ algebras $C\rightarrow C'$ induces a morphism of coalgebras $\mathscr{C}(C)\rightarrow \mathscr{C}(C')$ and so a morphism at the level of~$R$. The invariance under weak equivalence also follows because weak equivalences induce a quasi-isomorphism at the level of the coalgebra $\mathscr{C}(C)$. Once fixed $R$, the jump ideals are canonically defined as ideals of $R$.

\begin{corollary}
The above corollary holds if the hypothesis are satisfied only at the level of cohomology: $C$ has $H^{\leq 0}(C)=0$ and $H^1(C)$ finite-dimensional, $M$ is bounded above and its cohomology is finite-dimensional in each degree.
\end{corollary}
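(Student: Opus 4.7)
The plan is to reduce to the strict version of the corollary just proved by invoking the homotopy transfer theorem for $L_\infty$ pairs recalled in Section~2. Transfer the $L_\infty$ structure of $(C,M)$ to its cohomology pair $(H(C),H(M))$: one obtains a transferred $L_\infty$ structure on $H(C)$, a transferred module structure on $H(M)$, and a weak equivalence $(C,M)\to(H(C),H(M))$ of $L_\infty$ pairs.

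As a graded vector space, $H(C)$ is concentrated in degrees $\geq 1$ with $H^1(C)$ finite-dimensional (by the cohomological hypothesis on $C$), while $H(M)$ is bounded above and finite-dimensional in each degree (by the cohomological hypothesis on $M$). Thus the transferred pair $(H(C),H(M))$ satisfies the strict hypotheses of the preceding corollary, so that $\Def(H(C))$ is pro-represented by the algebra $R=H^0(\mathscr{C}(H(C)))^*$ and $\Def_k^i(H(C),H(M))$ by $R/J_k^i(H(M)\otimes R,\,d_u)$, where $d_u$ is the universal derivation on $H(M)\otimes R$ associated to the universal Maurer--Cartan element $\omega_u\in\MC(H(C))(R)$.

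It then suffices to invoke the invariance of $\Def$ and $\Def_k^i$ under weak equivalences (of $L_\infty$ algebras and of $L_\infty$ pairs respectively), already recorded earlier, which yields natural isomorphisms $\Def(C)\cong\Def(H(C))$ and $\Def_k^i(C,M)\cong\Def_k^i(H(C),H(M))$ and transports pro-representability back to $(C,M)$. The only subtle point, and the only step requiring real attention, is that the transferred $L_\infty$ operations on $H(C)$ and the transferred module actions on $H(M)$ depend on a choice of contracting data for the quasi-isomorphism $C\to H(C)$; however, the finiteness and boundedness assumptions needed to run the strict corollary concern only the underlying graded vector spaces $H(C)$ and $H(M)$, which are unchanged by such choices, and the resulting pro-representing algebra together with its jump ideals are canonically defined up to isomorphism as invariants of the deformation functor itself.
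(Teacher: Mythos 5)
Your proposal is correct and is essentially the paper's own argument: the paper proves this corollary in one sentence by invoking homotopy transfer to the cohomology pair $(H(C),H(M))$ (which satisfies the strict hypotheses) together with invariance of the deformation functors under weak equivalences, exactly as you do. Your expanded version, including the remark that the transferred structure depends on choices but the underlying graded vector spaces and the resulting pro-representing algebra do not, is a faithful elaboration of the same reduction.
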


This follows from the homotopy transfer of structure and the invariance of the deformation functor under quasi-isomorphisms, as in \cite{BudurRubio} for $L_\infty$ pairs.

\section{Augmented deformation functors with constraints}
\label{section:cone-module}
In our previous work \cite{Lefevre2}, one important starting point (easy remark, but conceptually this is one motivation for going to $L_\infty$ algebras) is that for the $L_\infty$ cone $C$ over an augmented DG Lie algebra $\epsilon: L\rightarrow \mathfrak{g}$ then we recover the deformation functor $\Def(C)=\Def(L,\epsilon)$. We need a version of this for augmented $L_\infty$ pairs.

\begin{theorem}
\label{theorem:module-over-cone}
Let $\epsilon:L\rightarrow\mathfrak{g}$ be an augmented DG Lie algebra and let $M$ be a module over $L$. Let $C$ be the $L_\infty$ cone over $\epsilon$. There exists a structure of $L_\infty$ module of $M$ over $C$ such that we recover the deformation functor
\begin{equation*}
\Def_k^i(C,M) = \Def_k^i(L,M,\epsilon).
\end{equation*}
This structure is functorial in the augmented pair $(L,M,\epsilon)$.
\end{theorem}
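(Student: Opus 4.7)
The plan is to exhibit the $L_\infty$-module structure on $M$ over the cone $C$ by pulling back the existing $L$-module structure along a canonical projection $\pi : C \to L$, defined as the identity on $L^i \subset C^i$ for $i\neq 1$ and as the projection $L^1 \oplus \mathfrak{g} \to L^1$ on $C^1$. Concretely, one sets
\begin{equation*}
m_n^C(c_1,\dots,c_{n-1},\xi) \;:=\; m_n^L(\pi(c_1),\dots,\pi(c_{n-1}),\xi),
\end{equation*}
so $m_1^C$ is the differential of $M$, $m_2^C$ is the $L$-action extended by zero on the $\mathfrak{g}$-component, and $m_n^C=0$ for $n\geq 3$.

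The key and essentially only nontrivial step, where the formulas of~\S\ref{section:the-cone} are used, is to check that $\pi$ is a \emph{strict} morphism of $L_\infty$-algebras: $\pi\circ \ell_n^C = \ell_n^L\circ \pi^{\otimes n}$ for every $n$. For $n=1$, the term $\epsilon(a)$ in $d_C a = (d_L a,\epsilon(a))$ lands in $\mathfrak{g}$ and is killed by $\pi$, while on $C^1=L^1\oplus\mathfrak{g}$ the differential is $(d_L b,0)$. For $n=2$, the mixed component $\ell_2(a,(b,v))=([a,b],\tfrac12[\epsilon(a),v])$ projects to $[a,b]=\ell_2^L(\pi(a),\pi(b,v))$. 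For $n\geq 3$ the cone brackets are non-trivial only on arguments with $n-1$ factors in $\mathfrak{g}$ and one in $L^0$, and take values in $\mathfrak{g}$, so $\pi\circ\ell_n^C=0$, matching $\ell_n^L=0$. Granted this, $(m_n^C)$ is automatically an $L_\infty$-module structure, each axiom on $C$ reducing via $\pi$ to the corresponding one on~$L$.

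With the structure defined, a Maurer--Cartan element of $C$ over $(A,\mathfrak{m}_A)$ is, by~\S\ref{section:the-cone}, a pair $(\omega,\alpha)\in(L^1\oplus\mathfrak{g})\otimes\mathfrak{m}_A$ with $\omega\in\MC(L)(A)$, and the twisted differential on $M\otimes A$ is
\begin{equation*}
d_{(\omega,\alpha)}(\xi) = \sum_{n\geq 0}\tfrac{1}{n!}\,m_{n+1}^C((\omega,\alpha)^{\otimes n},\xi) = d_M\xi + m_2^L(\omega,\xi) = d_\omega(\xi),
\end{equation*}
independent of $\alpha$. Hence $J_k^i(M\otimes A,d_{(\omega,\alpha)})=J_k^i(M\otimes A,d_\omega)$, and combined with the equality $\Def(C)=\Def(L,\epsilon)$ of~\S\ref{section:the-cone}---which matches homotopy on $C$ with the quotient action of $\exp(L^0\otimes\mathfrak{m}_A)$ on pairs $(\omega,e^\alpha)$---this yields $\Def_k^i(C,M)=\Def_k^i(L,M,\epsilon)$. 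Functoriality is immediate: a morphism of augmented pairs induces a strict morphism of cones commuting with the projections to $L$ and $L'$, so the pullback module structures transport correctly. The main conceptual point, and the step one might worry about, is precisely bypassed by going through the strict morphism $\pi$; a direct definition of $m_n^C$ imitating the explicit formulas for $\ell_n^C$ would force a hand-verification of the full $L_\infty$-module axioms.
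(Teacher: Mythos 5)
Your proof is correct, but it takes a genuinely different route from the paper's. The paper constructs the module structure by forming the auxiliary DG Lie algebra $L\oplus M$ of \S~\ref{section:L-infinity-pair-sum}, augmenting it by $\epsilon'$ (zero on $M$), taking its Fiorenza--Manetti $L_\infty$ cone $C'=C\oplus M$, and reading off the operations: since the only higher operations of $C'$ involve $\epsilon'$ of one argument and take values in $\mathfrak{g}$, they never produce a component in $M$, so one recovers the cone structure on $C$ together with a module structure on $M$ on which $\mathfrak{g}$ acts trivially. You instead restrict the given $L$-module structure along the projection $\pi:C\to L$, after checking from the explicit formulas of \S~\ref{section:the-cone} that $\pi$ is a strict $L_\infty$ morphism (which it is: every operation of $C$ not inherited from $L$ lands in $\mathfrak{g}\subset C^1$ and is killed by $\pi$). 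The two constructions yield the same operations ($m_1=d_M$, $m_2$ the $L$-action extended by zero on $\mathfrak{g}$, $m_n=0$ for $n\geq 3$), and your identification of $\MC(C)(A)$, of $d_{(\omega,\alpha)}=d_\omega$, and of the homotopy relation with the gauge action agrees with the paper's. What your approach buys is that the $L_\infty$-module axioms come for free from restriction along a strict morphism, with the only computation being the strictness of $\pi$; what the paper's approach buys is that the whole package is literally an instance of the Fiorenza--Manetti cone applied to the augmented algebra $L\oplus M$, which keeps the construction inside the pair-sum formalism that is reused later (e.g.\ in the homotopy transfer argument of \S~\ref{section:proof-main}) and makes functoriality in $(L,M,\epsilon)$ an instance of functoriality of the cone. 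Your functoriality argument (morphisms of augmented pairs commute with the projections $\pi$, so the pulled-back structures correspond) is equally valid.
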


\begin{proof}
First, to show that such a structure exists, it is practical to form the DG Lie algebra  $L\oplus M$ as in \S~\ref{section:L-infinity-pair-sum}. The bracket is simply described as
\begin{equation}
[(\ell_1,\xi_1), (\ell_2,\xi_2)] := ([\ell_1,\ell_2], \ell_1.\xi_2 - \ell_2.\xi_1) .
\end{equation}
Then there is an augmentation $\epsilon':L\oplus M \rightarrow \mathfrak{g}$ induced by $\epsilon$ which is again a morphism of DG Lie algebras (it is in fact induced by a morphism of pairs $(L,M)\rightarrow (\mathfrak{g},0)$). Form its $L_\infty$ cone $C'$: as complex, $C'= L \oplus M \oplus \mathfrak{g}[-1] = C \oplus M$. In the formulas of Fiorenza-Manetti, the only non-trivial higher operations on $C'$ are made with iterated brackets in $\mathfrak{g}$ of $n$ elements of $\mathfrak{g}$ with $\epsilon'$ of one element of $C'$. But this $\epsilon'$ is zero on $M$. So with this $L_\infty$ structure on $C\oplus M$ we recover, first a $L_\infty$ structure on $C$ which is the same as the one of $L_\infty$ cone over $\epsilon$, and then a structure of $L_\infty$ module on $M$. And we see that the action of $C$ on $M$ is induced only by the action of $L$: $\mathfrak{g}$ does not act on $M$.

Then we identify the deformation functor. Let $(A,\mathfrak{m}_A)\in \Art$. An element of $\Def_k^i(C,M)(A)$ is given by an element $\omega' \in \MC(C)(A)$ such that $J_k^i(M\otimes A, d_{\omega'})=0$. We know then from the computations of Fiorenza-Manetti that $\omega'$ corresponds to a pair of $\omega\in \MC(L)(A)$ and $e^{\alpha}\in \exp(\mathfrak{g}\otimes \mathfrak{m}_A)$. And again since the action of $\mathfrak{g}$ on $M$ is trivial we see that that for the induced differential
\begin{equation}
d_{\omega'} = d_{\omega}
\end{equation}
so of course $J_k^i(M\otimes A, d_{\omega'}) = J_k^i(M\otimes A, d_{\omega})$.

Finally, we use also that the homotopy relation between two objects $(\omega_1, e^{\alpha_1})$ and $(\omega_2, e^{\alpha_2})$ of $\MC(C)(A)$ is already computed by Fiorenza-Manetti. It is given by the existence of some $e^\lambda \in \exp(L^0 \otimes \mathfrak{m}_A)$ such that $e^\lambda.\omega_1=\omega_2$ and $e^{\alpha_2}=e^{\alpha_1}*e^{-\epsilon(\lambda)}$. So we see that this gives the same equivalence relation as the one defining $\Def_k^i(L,M,\epsilon)$ by gauge action.
\end{proof}

The functoriality also gives a $L_\infty$ proof of the invariance of the augmented deformations functors: a quasi-isomorphism of pairs $\phi:(L_1,M_1)\rightarrow (L_2,M_2)$ commuting with the augmentations $\epsilon_1,\epsilon_2$ to the same $\mathfrak{g}$ induces a quasi-isomorphism of $L_\infty$ pairs $(C_1,M_1)\rightarrow (C_2,M_2)$ hence $\Def_k^i(L_1,M_1,\epsilon_1)=\Def_k^i(L_2,M_2,\epsilon_2)$.

Up to now, we have exposed everything we need to pro-represent the deformation functor $\Def_k^i(L,M,\epsilon)$ in our case of interest where $L$ is only non-negatively graded and the restriction of $\epsilon$ to $H^0(L)$ is injective: then we form the $L_\infty$ cone $C$ which has $H^0(C)=0$ and turn interest to the deformation functor $\Def_k^i(C,M)$, applying the methods of the previous section, and eventually using a homotopy transfer of structure we reduce to the $L_\infty$ pair $(H(C),H(M))$.

\section{Mixed Hodge structures}
In this section, $\kk$ is a sub-field of $\RR$. We could also work with $\kk=\CC$ by appropriately modifying the definitions.

\subsection{Mixed Hodge structures}
First recall what a mixed Hodge structure is.

\begin{definition}
A \emph{mixed Hodge structure} (MHS) is given by a finite-dimensional vector space $H$ over $\kk$, with an increasing filtration $W_\bullet$ of $H$ (\emph{weight} filtration) and a decreasing filtration $F^\bullet$ of $H\otimes\CC$ such that the weight-graded pieces decompose as
\begin{equation}
\Gr^W_k(H) \otimes \CC = W_k(H)/W_{k-1}(H)\otimes \CC = \bigoplus_{p+q=k} H^{p,q}
\end{equation}
where the complex conjugation exchanges $H^{p,q}$ and $H^{q,p}$ and for the induced filtration $F^\bullet$ on $\Gr^W_k(H)\otimes\CC$,
\begin{equation}
F^p=\bigoplus_{p'\geq p} H^{p',k-p'}.
\end{equation}
A MHS where $W_\bullet$ is concentrated in weight $k$ is called a \emph{pure Hodge structure} of weight $k$
\end{definition}

The main theorem of Deligne is that MHS exist on each cohomology group of complex algebraic varieties or quasi-Kähler manifolds; when $X$ is proper and smooth, or compact Kähler, then on $H^k(X,\QQ)$ the MHS is the classical pure Hodge structure of weight $k$.

MHS form a nice abelian category with a tensor product. Hence it is easy to speak of algebras, Artin rings, graded Lie algebras etc internally to the category of MHS. We will in particular denote by $\mathbf{Art+MHS}$ the category of Artin local algebras with a compatible MHS, and by $\mathbf{\widehat{Art}+MHS}$ the pro-Artin algebras $R$ all whose quotients $R/\mathfrak{m}_R^n$ are in $\mathbf{Art+MHS}$ in a compatible way (since MHS are by definition finite-dimensional, $R$ cannot carry a MHS in the usual sense, so it is really a projective limit of objects of $\mathbf{Art+MHS}$).

\subsection{Mixed Hodge complexes}
In practice, MHS are not easy to construct. The notion introduced by Deligne is that of a mixed Hodge complex, whose cohomology gives MHS in each degree.

\begin{definition}
\label{definition:MHC}
A \emph{mixed Hodge complex} (MHC) is given by two bounded-below complexes $K_\kk$ over $\kk$, carrying a filtration $W_\bullet$, and $K_\CC$ over $\CC$ carrying two filtrations $W_\bullet,F^\bullet$, with finite-dimensional cohomology in each degree, and a quasi-isomorphism $\phi$ between $K_\kk \otimes \CC$ and $K_\CC$, such that:
\begin{itemize}
\item $\phi$ is given by a zig-zag of $W$-filtered quasi-isomorphisms (i.e.\ inducing a quasi-isomorphism on the $W$-graded pieces),
\item On $\Gr^W_k(K_\CC)$ the induced differential is strictly compatible with $F$,
\item On $H^n(\Gr^W_k(K_k))\otimes \CC \simeq H^n(\Gr^W_k(K_\CC))$, the filtration $F$ induces a pure Hodge structure of weight $k+n$.
\end{itemize}
\end{definition}

As we said, the theorem of Deligne is that the $W$-spectral sequence of $K$ degenerates at $E_2$ and then each $H^n(K_\kk)$ is a MHS over $\kk$, for the shift $W[n]$ of the induced filtration $W$ such that $W[n]_k=W_{k-n}$ and the induced filtration $F$ over $\CC$.

Let us also mention that a \emph{quasi-isomorphism} of MHC $K\rightarrow L$ is given by $W$-filtered quasi-isomorphisms $K_\kk \rightarrow L_\kk$ and by a bifiltered quasi-isomorphism (inducing a quasi-isomorphism on the $\Gr^w \Gr_F$ pieces) $K_\CC \rightarrow L_\CC$.

Again MHC admit a tensor product, so can speak of algebras internally to the category of MHC. For example in a MHC with a structure of DG Lie algebras, both $K_\kk$ and $K_\CC$ are DG Lie algebras, the filtrations are multiplicative under the Lie bracket and the quasi-isomorphisms between them are quasi-isomorphisms of DG Lie algebras. We call these simply MHC + Lie algebras. Similarly there are pairs of MHC with a structure of DG Lie pair, called MHC + Lie pairs.

There is however a problem with cones and $L_\infty$ algebras. The axioms of MHC indeed imply that the shift $K[r]$ of the MHC $K$ (with $K[r]^n:=K^{n+r}$) is again a MHC if one puts on it the induced filtration $F$, with
\begin{equation}
F^p (K_\CC[r]^n) := F^p K^{n+r}_\CC,
\end{equation}
and the shifted weight filtration $W[r]$ so that
\begin{equation}
W_k (K[r]^n) := W_{k-r} K^{n+r}.
\end{equation}
Similarly, the shifted cone $C$ of a morphism $K\rightarrow L$ of MHC is again a MHC with
\begin{equation}
W_k C^n := W_k K^n \oplus W_{k+1} L^{n-1}.
\end{equation}
Thus one has to be very careful if one wants to consider operations of $L_\infty$ algebras as in \cite[\S~8]{Lefevre2}, that shift the degree, on MHC.

\subsection{Absolute Hodge complexes}
\label{section:AHC}
To remedy this, we will work with a category that differs slightly of MHC. This was introduced first by Beilinson (\cite{Beilinson}) in order to study the derived category of MHS, see also \cite{CiriciGuillenComplexes} for many more details.

\begin{definition}[{\cite[Def.~4.5]{CiriciGuillenComplexes}}]
An \emph{absolute Hodge complex} (AHC) is given, as for MHC, by filtered complexes $(K_\kk,W_\bullet)$ and $(K_\CC,W_\bullet,F^\bullet)$ a quasi-isomorphism $\phi$ between $K_\kk \otimes \CC$ and $K_\CC$, such that:
\begin{itemize}
\item $\phi$ is given by a zig-zag of $W$-filtered quasi-isomorphisms,
\item On $\Gr^W_k(K_\CC)$ the induced differential is strictly compatible with $F$,
\item On $H^n(\Gr^W_k(K_k))\otimes \CC \simeq H^n(\Gr^W_k(K_\CC))$, the filtration $F$ induces a pure Hodge structure of weight~$n$.
\end{itemize}
\end{definition}

With this definition one can show that each term $H^n(K)$ carry a MHS directly with the induced filtrations $F$ and $W$: they play a more symmetrical role and need not to be shifted.

The definition is also made up so that one gets an AHC from a MHC by applying the following functor $\Dec$ to the weight filtration.

\begin{definition}
For any filtered complex $(K^\bullet,W_\bullet)$ the filtration $\Dec W$ is defined by
\begin{equation}
(\Dec W)_k K^n := \Ker\big( W_{k-n} K^n \stackrel{d}{\longrightarrow} K^{n+1}/W_{k-n-1} \big) .
\end{equation}
\end{definition}

It is easy to show that on cohomology
\begin{equation}
\Gr^{\Dec W}_k H^n(K) = \Gr^W_{k-n} H^n(K)
\end{equation}
and that $\Dec$ shifts the associated $W$-spectral sequence.
By applying $\Dec$ to the filtration $W$ of the two components of a MHC one gets a functor:

\begin{lemma}[{\cite[\S~4.2]{CiriciGuillenComplexes}}]
The functor $\Dec$ sends MHC to AHC.
\end{lemma}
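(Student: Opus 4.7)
My plan is to check directly the three axioms of AHC for $\Dec K$, using as the main technical tool Deligne's décalage lemma (Hodge~II, Prop.~1.3.4), which relates the spectral sequences of $(K,W)$ and $(K,\Dec W)$ by a shift of one page:
\begin{equation*}
E_r^{p,q}(\Dec W,K) \;\cong\; E_{r+1}^{2p+q,-p}(W,K), \qquad r \geq 1,
\end{equation*}
up to a reindexing of bidegrees; in particular $E_1(\Dec W) \cong E_2(W)$.

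For the first axiom, any morphism $f\colon (K,W)\to (L,W)$ that is a $W$-filtered quasi-isomorphism induces an isomorphism on $E_1(W)$, hence on every $E_r(W)$ for $r\geq 1$, and therefore by décalage on $E_1(\Dec W)$; thus $f$ is a $\Dec W$-filtered quasi-isomorphism. Consequently the whole zig-zag connecting $K_\kk\otimes\CC$ to $K_\CC$ remains a zig-zag of filtered quasi-isomorphisms after applying $\Dec$. For the third axiom, the MHC hypothesis provides pure Hodge structures of weight $p+q$ on each $E_2^{p,q}(W,K_\CC)$; via the shift identity, $H^n(\Gr^{\Dec W}_k K_\CC)$ corresponds to some $E_2^{p,q}(W)$ with $p+q = n$, and the induced $F$-filtration yields a pure Hodge structure of weight exactly $n$, as demanded by the AHC definition. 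The weight renormalization from $k+n$ (MHC convention) to $n$ (AHC convention) is automatic from décalage. For the second axiom, strict compatibility of $d$ with $F$ on $\Gr^{\Dec W} K_\CC$ will follow from the bigraded decomposition which each page of the $W$-spectral sequence of a MHC carries compatibly with the differentials; this decomposition is transferred by the shift to the pages of the $\Dec W$-spectral sequence, and in particular to $E_0(\Dec W) = \Gr^{\Dec W} K_\CC$, whence the required strictness.

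The main obstacle is purely bookkeeping: matching filtration indices through the shift identity so that the weight on $H^n(\Gr^{\Dec W}_k K_\CC)$ comes out as $n$ and not as $k+n$ or $k-n$, and confirming the strictness claim at the $E_0(\Dec W)$ level rather than at higher pages. No genuinely new Hodge-theoretic input is needed beyond décalage and Deligne's bigraded decomposition theorem on the pages of the $W$-spectral sequence of a MHC; indeed the AHC axiom on pure Hodge weights is engineered precisely so that $\Dec$ turns a MHC into an AHC.
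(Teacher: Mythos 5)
The paper does not prove this lemma --- it is quoted from Cirici--Guill\'en --- so your proposal should be measured against the proof in that reference, which does indeed go through Deligne's d\'ecalage. Your strategy is the right one, and your treatment of the first axiom is correct: an isomorphism on $E_1(W)$ gives one on $E_2(W)$, hence on $E_1(\Dec W)$ by the shift, so $W$-filtered quasi-isomorphisms become $\Dec W$-filtered quasi-isomorphisms. The gap is in the second axiom. You propose to ``transfer'' the bigraded decomposition of the pages of the $W$-spectral sequence to $E_0(\Dec W)=\Gr^{\Dec W}K_\CC$. But the d\'ecalage comparison at page $0$ is only a \emph{quasi-isomorphism} of complexes $E_0^{p,q}(\Dec W)\rightarrow E_1^{2p+q,-p}(W)$, not an isomorphism, and strict compatibility of the differential with $F$ is a statement at the cochain level of $\Gr^{\Dec W}K_\CC$ that cannot be pulled back along a quasi-isomorphism. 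This is precisely the non-formal content of the lemma: one needs Deligne's two-filtrations lemma (Scholie~8.1.9 of Hodge~III --- coincidence of the three filtrations induced by $F$ on the $E_r(W)$, degeneration of the $F$-spectral sequence, strictness of $d_1$) and then an explicit check on representatives $x\in(\Dec W)_kK^n$, i.e.\ $x\in W_{k-n}K^n$ with $dx\in W_{k-n-1}K^{n+1}$.

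There is also a problem with the weight bookkeeping in the third axiom, which you identify as the main obstacle but then assert rather than carry out, and the asserted answer is not what the computation gives. The MHC axiom says $E_1^{p,q}(W)=H^{p+q}(\Gr^W_{-p}K)$ is pure of weight $q$, and d\'ecalage identifies $H^n(\Gr^{\Dec W}_kK)$ with $E_2^{n-k,k}(W)$, which is therefore pure of weight $k$ --- the filtration index, not the cohomological degree $n$. This is also what is needed for the conclusion: it makes $\Gr^W_kH^n(\Dec K)$ pure of weight $k$, i.e.\ $H^n(K)$ a MHS for the \emph{unshifted} filtrations. (The ``weight $n$'' in the paper's printed definition of AHC appears to be a slip for ``weight $k$''; as literally stated it would force every AHC to have pure cohomology of weight $n$ in degree $n$.) So your axiom-three argument happens to match the misprint but not the correct target, and your axiom-two argument is missing the essential Hodge-theoretic input.
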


Furthermore the mapping cone $C$ of a morphism of AHC $K\rightarrow L$ is again a AHC with
\begin{equation}
F^p C^n_\CC := F^p K^n_\CC \oplus F^p L^{n-1}_\CC
\end{equation}
and
\begin{equation}
W_k C^n := W_k K^n \oplus W_k L^{n-1},
\end{equation}
and also the shift of an AHC is again an AHC in the most obvious way. The tensor product still makes sense as for MHC, and the functor $\Dec$ is \emph{lax symmetric monoidal} (see \S~\ref{section:with-MHS}): for two MHC $K,L$ there is a natural map of AHC
\begin{equation}
\label{equation:dec-symmetric-monoidal}
(\Dec K) \otimes (\Dec L) \longrightarrow \Dec (K \otimes L)
\end{equation}
and this implies that the $\Dec$ of a MHC + Lie pair is an AHC + Lie pair.

\subsection{Tannakian point of view}
\label{section:tannakian}
Let us recall briefly that the category of MHS is a \emph{Tannakian category} whose fiber functor is given by the underlying $\kk$-vector space. In particular it is equivalent to the category of representations of a certain algebraic group $\mathscr{G}$. In this setting, a sub-MHS of $H$ is a sub-representation of $\mathscr{G}$, that is, a sub-space of $H$ stable by the $\mathscr{G}$-action. Morphisms of MHS are $\mathcal{G}$-equivariant maps. An algebra $(A,\mu)$ with a compatible MHS means that the product map is $\mathcal{G}$-equivariant, that is, for $g\in\mathcal{G}$ and $a,b\in A$ one has 
\begin{equation}
g\cdot\mu(a,b)=\mu(g\cdot a, g\cdot b) ;
\end{equation}
similarly if $M$ is an $A$-module and a MHS then the action map $m:A\otimes M\rightarrow M$ respects the MHS iff it satisfies that for $g\in\mathcal{G}$, $a\in A$, $\xi\in M$ then
\begin{equation}
g\cdot m(a,\xi)=m(g\cdot a, g\cdot \xi).
\end{equation}
We also see that if the element $a\in A$ is fixed by the $\mathscr{G}$-action (that is, is defined over $\kk$ and is in $W_0 \cap F^0$) then the multiplication by $a$ in $M$ is equivariant, that is, a morphism of MHS.

\section{Geometry}
\label{section:geometry}
Here we mostly recall the framework of \cite{Lefevre2}, \cite{Lefevre3} incorporating the jump ideals.

\subsection{Representations}
Let $X$ be any smooth differentiable manifold having the homotopy type of a finite CW complex. Let $G$ be a linear algebraic group over $\CC$ with a fixed embedding into some $GL_N(\CC)$. We want to study representations of the fundamental group $\pi_1(X,x)$ (at a fixed base point $x$) into $G$. The set
\begin{equation}
\Hom_{\mathbf{Group}}(\pi_1(X,x), G(\CC))
\end{equation}
has naturally a structure of affine scheme of finite type over $\CC$ because $G$ is affine and $\pi_1(X,x)$ is finitely generated; it is called the \emph{representation variety} and is denoted by $\Hom(\pi_1(X,x), G)$. Inside of it are defined \emph{closed} subschemes corresponding to the representations $\rho$ where the dimension of the cohomology of the corresponding local system $V_\rho$ ($\rho$ acts on some $\CC^N$ via the fixed linear embedding of $G)$ jumps: for two integers $i,k\geq 0$
\begin{equation}
\Sigma_k^i := \left\{ \rho:\pi_1(X,x)\rightarrow G(\CC)\ \big|\ \dim(H^i(X, V_\rho)) \geq k \right\}
\subset \Hom(\pi_1(X,x), G).
\end{equation}
These are the \emph{cohomology jump loci}. More generally, we can fix a local system $W$ over $X$ and consider the \emph{relative} cohomology jump loci
\begin{equation}
\Sigma_k^i(W) := \left\{ \rho:\pi_1(X,x)\rightarrow G(\CC)\ \big|\ \dim(H^i(X, V_\rho \otimes W)) \geq k \right\}.
\end{equation}
For $W$ the trivial local system $\underline{\CC}_X$ this recovers the absolute loci.

\begin{definition}
Let $\rho:\pi_1(X,x)\rightarrow G(\CC)$ be a given representation, seen as a point of the scheme $\Hom(\pi_1(X,x), G)$. We denote by $\Ohat_\rho$ the completion of the local ring of $\Hom(\pi_1(X,x), G)$ at $\rho$ and by $J_k^i(W) \subset \Ohat_\rho$ the ideal defining $\Sigma_k^i(W)$ at $\rho$. This is the \emph{jump ideal} of $\rho$ relatively to $W$. The associated \emph{deformation functors} $\Art \rightarrow \Set$ are given by
\begin{equation}
\Def(\rho):A \longmapsto \Hom(\Ohat_\rho, A)
\end{equation}
and
\begin{equation}
\Def_k^i(\rho,W): A \longmapsto \Hom(\Ohat_\rho/J_k^i(W), A) .
\end{equation}
\end{definition}

Indeed 
\begin{equation}
\Def(\rho)(A) = \left\{ \tilde\rho:\pi_1(X,x) \rightarrow G(A)\ \big|\ \tilde\rho=\rho \ \text{in}\ G(\CC)\right\}
\end{equation}
is the functor of formal deformations of $\rho$ and $\Def_k^i(\rho,W)$ is its functor of deformations preserving the cohomology constraints.

Finally let us mention that everything of this can be done over a sub-field $\kk\subset \CC$: if~$G$ is defined over~$\kk$ and $\rho$ has values in $G(\kk)$, then the schemes $\Hom(\pi_1(X,x),G)$, $\Sigma_k^i$ are defined over~$\kk$ and the rings $\Ohat_\rho$, $\Ohat_\rho/J_k^i$ are also defined over~$\kk$; if $W$ is a local system over $\kk$ then the $\Sigma_k^i(W)$ are also defined over $\kk$. The deformation functors are then defined over local Artin algebras over~$\kk$.

\subsection{Goldman-Millson theory}
Now we fix the representation $\rho$ and the local system $W$. To this is associated, as we said, a local system $V_\rho$ of $\CC$-vector spaces with a fixed framing at $x$, but also a local system of Lie algebras
\begin{equation}
\ad_\rho\subset\End(V_\rho)
\end{equation}
corresponding to the adjoint action of $\rho$ on the Lie algebra $\mathfrak{g}$ of $G$. Thus $\ad_\rho$ acts on $V_\rho$ and on $V_\rho\otimes W$, and its fiber at $x$ is canonically identified with $\mathfrak{g}$.

Let $L$ be the algebra of $\mathcal{C}^\infty$ differential forms on $X$ with values in $\ad_\rho$ and let $M$ be the algebra of $\mathcal{C}^\infty$ differential forms with values in $V_\rho \otimes W$. Then $L$ has a natural structure of DG Lie algebra, with the bracket combining the wedge product of forms with the Lie bracket of $\ad_\rho$ and with the differential coming from the flat structure of $\ad_\rho$; and $M$ has a structure of module over $M$ combining the wedge product of forms with the action of $\ad_\rho$ on $V_\rho$.

Furthermore, $L$ is equipped with an augmentation $\epsilon$ to $\mathfrak{g}$ which evaluates forms of degree zero at $x$.

\begin{theorem}[{Goldman-Millson~\cite{GoldmanMillson}, Budur-Wang~\cite[\S~7]{BudurWang}}]
\label{theorem:GoldmanMillsonBudurWang}
There are canonical isomorphisms of deformation functors
\begin{equation}
\Def(\rho)=\Def(L,\epsilon)
\end{equation}
and of sub-functors
\begin{equation}
\Def_k^i(\rho,W) = \Def_k^i(L,M,\epsilon) .
\end{equation}
\end{theorem}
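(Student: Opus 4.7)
The plan is to translate the deformation problem on the group-theoretic side into a classification of flat structures on $C^\infty$ bundles, and then identify that classification with Maurer--Cartan data for $L$, adding a bookkeeping factor to encode the framing at $x$. For $A\in\Art$, an element of $\Def(\rho)(A)$ is a homomorphism $\tilde\rho:\pi_1(X,x)\to G(A)$ reducing to $\rho$ mod $\mathfrak m_A$; by the Riemann--Hilbert correspondence for $C^\infty$ bundles this is the same as a flat connection on the trivial $A$-module bundle $V_\rho\otimes A$ extending the flat structure of $V_\rho$, together with a trivialization of the fibre at $x$. Writing the new connection as $d+\omega$, flatness is exactly $d\omega+\tfrac12[\omega,\omega]=0$, with $\omega\in L^1\otimes\mathfrak m_A$. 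This is the classical Goldman--Millson dictionary.

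For the equivalence relation, two such $\omega_i$ give isomorphic deformations of the \emph{framed} local system iff there is $\lambda\in L^0\otimes\mathfrak m_A$ with $\epsilon(\lambda)=0$ and $e^\lambda\cdot\omega_1=\omega_2$. To pass from framed deformations to deformations of $\rho$ up to the equivalence encoded in $\Def(\rho)$, one must also allow changing the framing at $x$ by an element $e^\alpha\in\exp(\mathfrak g\otimes\mathfrak m_A)$; the full gauge group $\exp(L^0\otimes\mathfrak m_A)$ then acts on pairs $(\omega,e^\alpha)$ by simultaneously transporting the connection and absorbing $e^{-\epsilon(\lambda)}$ into the framing, which is precisely the definition of $\Def(L,\epsilon)$. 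This gives the first isomorphism.

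For the sub-functors, I would argue that under the correspondence above, the module $M\otimes A$ equipped with the twisted differential $d_\omega(-)=d(-)+\omega\cdot -$ is the de Rham complex of the local system $V_{\tilde\rho}\otimes W$ with coefficients tensored up to~$A$, so its cohomology computes $H^\bullet(X,V_{\tilde\rho}\otimes W)\otimes_{\CC}A$ (this is a twisted Poincar\'e lemma plus partition-of-unity argument). Consequently, for any residue map $A\to\CC$ corresponding to a $\CC$-point $\tilde\rho$, the constraint $J_k^i(M\otimes A,d_\omega)=0$ specializes to $\dim H^i(X,V_{\tilde\rho}\otimes W)\geq k$, which is the set-theoretic defining condition of $\Sigma_k^i(W)$. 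Compatibility with arbitrary base change $A\to B$ in $\Art$, together with the invariance of jump ideals under quasi-isomorphism, pins down $J_k^i(W)\subset\Ohat_\rho$ as the ideal cut out by the $J_k^i(M\otimes A,d_\omega)$ as $A$ varies, matching $\Def_k^i(L,M,\epsilon)$.

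The main obstacle is that $L$ and $M$ are infinite-dimensional, so the jump ideals on the algebraic side are not literally defined by minors of a single finite matrix; one must replace $M\otimes A$ by a perfect $A$-complex quasi-isomorphic to the twisted de Rham complex and verify that the resulting ideal is independent of the replacement and functorial in~$A$. This is handled in \cite{BudurWang} using the invariance of cohomology jump ideals under quasi-isomorphism of bounded-above complexes with finite-dimensional cohomology; combining this with the Goldman--Millson identification of $\MC$ data with flat deformations, and tracking the framing via the $\exp(\mathfrak g\otimes\mathfrak m_A)$ factor as above, both claimed identifications follow.
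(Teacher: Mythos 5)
The paper offers no proof of this statement --- it is imported verbatim from Goldman--Millson and Budur--Wang (\S 7) --- and your sketch is exactly the standard argument of those references (Riemann--Hilbert for the trivial $\mathcal{C}^\infty$ bundle identifying lifts $\tilde\rho$ with flat connections $d+\omega$ plus a framing at $x$, gauge orbits of pairs $(\omega,e^\alpha)$ absorbing $e^{-\epsilon(\lambda)}$, and base-change compatibility of cohomology jump ideals computed from a bounded-above free replacement of the twisted de Rham complex), so it is essentially the same approach and correct. One slip worth fixing: the cohomology of $(M\otimes A,d_\omega)$ is the cohomology of the deformed local system of \emph{$A$-modules} and is not $H^\bullet(X,V_{\tilde\rho}\otimes W)\otimes_{\CC}A$ in general --- if it were free over $A$ the jump ideals would always be $0$ or $A$ and the sub-functors would be vacuous --- though your subsequent appeal to perfect replacements and base change shows you are using the correct statement.
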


We are now in a situation where we can apply our pro-representability methods of \S~\ref{section:pro-representability}--\ref{section:cone-module}: here $\epsilon$ is injective on $H^0(L)$ so the $L_\infty$ cone $C$ has $H^0(C)=0$, and $L$ and $M$ of course satisfy the appropriate boundedness and finite-dimensional hypotheses. So the above two deformation functors defined with DG Lie algebras are pro-representable and can be computed with $C$ instead of $L,\epsilon$.

\begin{corollary}
The local ring $\Ohat_\rho$ can be computed as the algebra $R$ that pro-represents $\Def(C)$, and the ideals $J_k^i(W)\subset \Ohat_\rho$ correspond to the jump ideals $J_k^i\subset R$ that pro-represent $\Def_k^i(C,M)$.
\end{corollary}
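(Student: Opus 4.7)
The plan is to assemble the functorial identifications of deformation functors established above and then invoke the pro-representability machinery of Section~\ref{section:pro-representability}. First, Theorem~\ref{theorem:GoldmanMillsonBudurWang} gives canonical isomorphisms $\Def(\rho)=\Def(L,\epsilon)$ and $\Def_k^i(\rho,W) = \Def_k^i(L,M,\epsilon)$. Then Theorem~\ref{theorem:module-over-cone} equips the $L_\infty$ cone $C$ of $\epsilon$ with a structure of $L_\infty$ pair $(C,M)$ and yields $\Def(L,\epsilon) = \Def(C)$ together with $\Def_k^i(L,M,\epsilon) = \Def_k^i(C,M)$. Since $\Ohat_\rho$ pro-represents $\Def(\rho)$ by definition, uniqueness of pro-representation will force $R \cong \Ohat_\rho$; the same argument applied to the sub-functors will identify $J_k^i \subset R$ with $J_k^i(W) \subset \Ohat_\rho$.

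Next I would verify that the hypotheses needed to apply Corollary~\ref{corallary:pro-representability} are met. Since $L$ consists of differential forms of degrees $0$ through $\dim X$, we have $L^{<0}=0$, hence $C^{<0}=0$. The injectivity of $\epsilon$ on $H^0(L)$ translates via the mapping cone long exact sequence into $H^0(C)=0$. Because $X$ has the homotopy type of a finite CW complex and $\mathfrak{g}$ is finite-dimensional, $H^1(C)$ is finite-dimensional. Finally, $M$ is bounded with finite-dimensional cohomology in each degree. Thus the cohomological version of Corollary~\ref{corallary:pro-representability} applies to the $L_\infty$ pair $(C,M)$ after homotopy transfer, providing a pro-Artin $R$ together with ideals $J_k^i \subset R$ pro-representing $\Def_k^i(C,M)$.

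The delicate point is not the pro-representability itself but the identification, at the level of ideals inside $R$, of the abstract $J_k^i = J_k^i(M\otimes R, d_u)$ cut out by minors of the universal differential with the geometric ideal $J_k^i(W)$ defining $\Sigma_k^i(W)$ inside $\Hom(\pi_1(X,x),G)$ at $\rho$. This compatibility is really already contained in the Budur-Wang half of Theorem~\ref{theorem:GoldmanMillsonBudurWang}: once $R \cong \Ohat_\rho$ has been fixed, the universal Maurer-Cartan element $\omega_u \in \MC(C)(R)$ corresponds under the identifications to the universal framed formal deformation $\tilde{\rho}_u:\pi_1(X,x)\rightarrow G(\Ohat_\rho)$, and for each Artin quotient $R\twoheadrightarrow A$ the ideal $J_k^i(M\otimes A, d_{\omega_A}) \subset A$ is by construction the ideal defining the locus where $\dim H^i(X,V_{\tilde{\rho}_A}\otimes W) \geq k$. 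Passing to the projective limit over the truncations $R/\mathfrak{m}_R^n$ then yields the desired equality of ideals in $R$, so that the identification $R \cong \Ohat_\rho$ restricts to the identification $J_k^i = J_k^i(W)$ claimed in the corollary.
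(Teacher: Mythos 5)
Your proposal is correct and follows essentially the same route as the paper: the corollary is obtained by chaining Theorem~\ref{theorem:GoldmanMillsonBudurWang} with Theorem~\ref{theorem:module-over-cone}, checking that $H^0(C)=0$ and the finiteness hypotheses so that the (cohomological version of the) pro-representability corollary of \S~\ref{section:pro-representability} applies, and then invoking uniqueness of the pro-representing object and the canonical definition of the jump ideals inside $R$. Your extra paragraph tracing the universal Maurer--Cartan element through the identifications is a harmless elaboration of what the paper leaves implicit.
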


Here we can also use any other pair that is quasi-isomorphic (above $\mathfrak{g}$) to $(L,M)$.

\subsection{With mixed Hodge structure}
\label{section:with-MHS}
Now we turn exactly to the setting of \cite{Lefevre3}: $X$ is either a smooth complex algebraic varieties, and we work with a compactification $X\subset \overline{X}$ by a normal crossing divisor, or $X$ is the complement of such a divisor inside a compact Kähler manifold $\overline{X}$ (i.e.\ $X$ is quasi-Kähler).

We make the assumption that the local system $V_\rho$ underlies an \emph{admissible variation of mixed Hodge structure} (VMHS).  Such local systems appear naturally from geometry as $R^n f_* \underline{\QQ}_X$ for a morphism of varieties $f:Y\rightarrow X$: each fiber carries a MHS over a vector space locally identified with $H^n(f^{-1}(x), \QQ)$, the weight filtration varies so as to define sub-local systems over $\QQ$ and the Hodge filtration varies so as to define sub-holomorphic vector bundles. So abstractly here $V_\rho$ is a local system defined over the field $\kk$, with a filtration by sub-local systems $W_\bullet \subset V_\rho$ and a filtration by holomorphic sub-vector bundles $\mathcal{F}^\bullet \subset V_\rho \otimes \mathcal{O}_X$, such that at each $x\in X$ the fiber $(V_{\rho,x}, W_{\bullet,x}, \mathcal{F}^\bullet_x)$ forms a mixed Hodge structure; furthermore the induced flat connection $\nabla$ satisfies $\nabla(\mathcal{F}^p) \subset \mathcal{F}^{p-1}$. The admissibility condition also requires graded-polarizability and quasi-unipotent monodromy at infinity around $D$.

In this situation, the cohomology groups $H^n(X, V_\rho)$ carry a natural MHS, which is computed from an appropriate MHC. The Hodge filtration is the natural combination of the one of $X$ and the one of $V_\rho$. The weight filtration is more difficult to describe unless $X$ is already compact: for a variation of \emph{pure} Hodge structure of weight $k$ on $X$, the weight of $H^n(X, V_\rho)$ is $n+k$. The local system $\ad_\rho\subset \End(V_\rho)$ also underlies a VMHS and so $H^n(X, \ad_\rho)$ carry MHS, and the induced Lie bracket and action maps are morphisms of MHS. More generally we can fix another such VMHS $W$ on $X$, then $\ad(\rho)$ acts on $V_\rho\otimes W$ in the category of admissible VMHS and $H^\bullet(X, \ad_\rho)$ acts on $H^\bullet(X, V_\rho \otimes W)$ via morphisms of MHS.

As we intensively motivated in our previous work we want to compute the MHS on $H^n(X,\ad(\rho))$ via a MHC $L$ that also has a structure of DG Lie algebra \emph{at the level of chain complex}. Thus it contains information on both the MHS on cohomology and on the deformation functor $\Def(\rho)$. This is not so trivial: in the naive construction of MHC the multiplicative structure over $\kk$ is not commutative at the cochain level but only up to homotopy, and the ideas to solve this come from rational homotopy theory; we use polynomial rational differential forms, easy to handle with the Thom-Whitney functors of~\cite{Navarro}.

Similarly we want now a pair $(L,M)$ of MHC that is also a DG Lie pair and computes
\begin{equation}
(H(X,\ad_\rho), H(X,V_\rho\otimes W)),
\end{equation}
what we call a MHC + Lie pair. Here we can use directly our previous work which is formulated in terms of \emph{lax symmetric monoidal functors}. Briefly, this means that in~\cite{Lefevre3} we defined a functor $\MHC(X,\overline{X},\mathbb{V})$ from VMHS with unipotent monodromy at infinity to MHC, depending on the compactification, such that for two such VMHS $\mathbb{V}_1, \mathbb{V}_2$ there is a natural map
\begin{equation}
\MHC(X,\overline{X}, \mathbb{V}_1) \otimes \MHC(X,\overline{X}, \mathbb{V}_2) \longrightarrow \MHC(X,\overline{X}, \mathbb{V}_1 \otimes \mathbb{V}_2)
\end{equation}
that commutes (strictly, and only not up to homotopy) with the interchange map of the two factors of the tensor products. So this framework gives us now the Lie module:

\begin{theorem}
Let $X$ be either algebraic smooth, or quasi-Kähler, let $V_\rho$ and $W$ be two admissible VMHS over $X$. Then $\Ohat_\rho$ has a canonical MHS and the jump ideals $J_k^i(W)\subset \Ohat_\rho$ are sub-MHS.
\end{theorem}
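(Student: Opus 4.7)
The plan is to reduce the statement to the algebraic theorem quoted in the introduction, by building a pair $(L,M)$ with augmentation which is simultaneously a DG Lie pair and a pair of mixed Hodge complexes, and then applying the pro-representability and homotopy-transfer machinery of Sections~\ref{section:pro-representability}--\ref{section:cone-module} internally to the category of MHS.

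First, I would use the lax symmetric monoidal functor $\MHC(X,\overline{X},-)$ of \cite{Lefevre3} to produce from the admissible VMHS $\ad_\rho$ and $V_\rho\otimes W$ two MHC $L$ and $M$ whose cohomologies are the Hodge-theoretic $H^\bullet(X,\ad_\rho)$ and $H^\bullet(X,V_\rho\otimes W)$. The lax monoidal structure turns $L$ into an internal DG Lie algebra and $M$ into an $L$-module, all structure maps being strict morphisms of MHC, with filtrations compatible with the bracket and the action. Evaluation of $0$-forms at $x$ gives an augmentation $\epsilon:L\to\mathfrak{g}$, where $\mathfrak{g}$ carries the pure Hodge structure of the fiber of $\ad_\rho$ at $x$. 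I would then apply Deligne's functor $\Dec$ to pass from MHC to AHC so that shifts and mapping cones stay well-behaved, preserving the DG Lie pair structure through the natural transformation~\eqref{equation:dec-symmetric-monoidal}. By Theorem~\ref{theorem:GoldmanMillsonBudurWang}, this pair with augmentation computes $\Def(\rho)$ and $\Def_k^i(\rho,W)$.

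Next, I would form the $L_\infty$ cone $C$ of $\epsilon$ as in Section~\ref{section:the-cone}. Since shifts and cones of AHC are AHC and the Fiorenza--Manetti operations $\ell_n$ only involve $\epsilon$ and the bracket of $L$, $C$ is an AHC with an internal $L_\infty$ structure. Theorem~\ref{theorem:module-over-cone} promotes $M$ to an $L_\infty$ module over $C$ through maps $m_n$ that are themselves morphisms of AHC, and since $\epsilon$ is injective on $H^0(L)$ we have $H^0(C)=0$, so the hypotheses of Corollary~\ref{corallary:pro-representability} are met at the level of cohomology. Applying the homotopy transfer theorem of Cirici--Sopena \cite{CiriciSopena} to the AHC $C\oplus M$ coming from the construction of Section~\ref{section:L-infinity-pair-sum}, I would transfer the whole $L_\infty$ pair structure to the finite-dimensional cohomology pair $\bigl(H(C),H(M)\bigr)$, obtaining operations that are morphisms of MHS together with a weak equivalence of $L_\infty$ pairs to $(C,M)$ compatible with the Hodge and weight filtrations. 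One has to take care that this transfer respects the cone decomposition $H^1(C)=H^1(L)\oplus\mathfrak{g}$ and the augmentation, which is ensured by choosing Hodge-compatible splittings of the projections to cohomology.

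Finally, I would invoke Corollary~\ref{corallary:pro-representability} internally to MHS. The pro-representing algebra $R$ of $\Def(C)=\Def(L,\epsilon)=\Def(\rho)$ is, by construction, the dual of $H^0\bigl(\Sym(H(C)[1])\bigr)$; since the coderivation $Q$ built from the transferred $\ell_n$ is a morphism of MHS, each finite-dimensional truncation $R/\mathfrak{m}_R^n$ inherits a canonical MHS in a way compatible with $n$, and this gives the MHS on $\Ohat_\rho$. For the jump ideals, the universal Maurer--Cartan element $\omega_u\in C^1\otimes\mathfrak{m}_R$ is a morphism of MHS by universality, so the universal derivation $d_u$ on $H(M)\otimes R$ defined by~\eqref{equation:derivation-M} is a morphism of MHS between pro-MHS. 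The ideal $J_k^i\bigl(H(M)\otimes R,d_u\bigr)\subset R$ is canonically defined from these data, independent of any basis of $H(M)$, so by the Tannakian description of Section~\ref{section:tannakian} it is stable under the action of the Mumford--Tate group and is therefore a sub-MHS of $R$. The main obstacle, which really is the heart of the proof, is the third step: performing the Cirici--Sopena transfer for the augmented $L_\infty$ pair rather than for a bare $L_\infty$ algebra, in a way compatible with the decomposition of the cone and with the augmentation to $\mathfrak{g}$; this requires combining the sum trick of Section~\ref{section:L-infinity-pair-sum} with the cone construction of Section~\ref{section:the-cone} at the level of AHC and checking that the chosen Hodge-compatible splittings preserve the pure Hodge structure on $\mathfrak{g}$ throughout.
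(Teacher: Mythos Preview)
Your proposal is correct and follows essentially the same route as the paper: build the MHC + Lie pair $(L,M,\epsilon)$ via the lax monoidal $\MHC$ functor, pass to AHC via $\Dec$, form the $L_\infty$ cone $C$ with $M$ as module over it, transfer to cohomology via Cirici--Sopena (using the sum trick of \S\ref{section:L-infinity-pair-sum}), and then read off the MHS on $R=H^0(\mathscr{C}(H(C)))^*$ and the sub-MHS on the jump ideals via the Tannakian argument.

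One clarification: the ``main obstacle'' you flag in the last paragraph is not actually present. Once the cone $C$ is formed and $M$ is made into an $L_\infty$ module over it (Theorem~\ref{theorem:module-over-cone}), the augmentation has been absorbed and there is no further decomposition to preserve; the paper simply treats $(C,M)$ as an abstract AHC + $L_\infty$ pair and transfers that. In particular your claimed splitting $H^1(C)=H^1(L)\oplus\mathfrak{g}$ does not hold in general---there is only a short exact sequence $0\to\mathfrak{g}/\epsilon(H^0(L))\to H^1(C)\to H^1(L)\to 0$---but this is irrelevant, since nothing in the construction of $R$ or of $d_u$ requires such a splitting. (Also, $\mathfrak{g}$ carries a MHS, not a pure Hodge structure, since $V_\rho$ is only assumed to be a V\emph{M}HS.)
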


\begin{proof}
This only requires taking into account the Lie module into the proof of \cite[Theorem~7.4]{Lefevre3}.

First fix the compactification $X\subset \overline{X}$ and assume that both $V_\rho$ and $W$ have unipotent monodromy at infinity.

Then $\ad_\rho$ and $V_\rho\otimes W$ are again such VMHS.
So we form
\begin{align}
L &:= \MHC(X,\overline{X}, \ad_\rho)\\
M &:=\MHC(X, \overline{X}, V_\rho \otimes W)
\end{align}
using the lax symmetric monoidal $\MHC$ functor constructed through \cite[\S4--6]{Lefevre3}. So $L$ is also naturally a DG Lie algebra with bracket the composition
\begin{equation}
\MHC(X,\overline{X}, \ad_\rho) \otimes \MHC(X,\overline{X}, \ad_\rho) \longrightarrow \MHC(X,\overline{X}, \ad_\rho\otimes \ad_\rho) \stackrel{\text{bracket}}{\longrightarrow} \MHC(X,\overline{X}, \ad_\rho)
\end{equation}
and $M$ is a module over $L$ with action map
\begin{equation}
\MHC(X,\overline{X}, \ad_\rho) \otimes \MHC(X,\overline{X}, V_\rho \otimes W) \longrightarrow \MHC(X,\overline{X}, {\ad_\rho} \otimes {V_\rho} \otimes W) \stackrel{\text{action}}{\longrightarrow} \MHC(X,\overline{X}, V_\rho \otimes W) .
\end{equation}
There is also an augmentation
\begin{equation}
\epsilon: L \longrightarrow \mathfrak{g}
\end{equation}
which is a morphism of MHC and of DG Lie algebras. Two different compactifications of $X$ lead to quasi-isomorphic MHC. If the monodromy is only quasi-unipotent one can work equivariantly over a finite cover of $X$ on which the monodromy of both $V_\rho$ and $W$ is unipotent.

From here there is everything to apply our theory: $(L,M,\epsilon)$ is an MHC + augmented Lie pair satisfying all the hypotheses of Theorem~\ref{Theorem2-section} in \S~\ref{section:proof-main}, so there is a MHS on the algebra $R$ that pro-represents $\Def(L,\epsilon)$ and a sub-MHS on the ideals $J_k^i \subset R$ that pro-represent $\Def_k^i(L,M,\epsilon)$. By the previous Theorem~\ref{theorem:GoldmanMillsonBudurWang} and its corollary, this puts a MHS on $\Ohat_k^i$ and shows that the $J_k^i(W) \subset \Ohat_\rho$ are sub-MHS.
\end{proof}

\section{Proof of the main theorem}
\label{section:proof-main}
We come to the core of this article and prove our Theorem~\ref{Theorem2-section} below in a sequence of lemmas, starting from the data that we get just in the previous section from the geometric situation.

So we assume that we are given a MHC + augmented Lie pair $(L,M,\epsilon)$. In particular:
\begin{itemize}
\item $L$ is a DG Lie algebra and $M$ is a DG module over $L$,
\item $\epsilon$ is an augmentation of $L$ to a Lie algebra $\mathfrak{g}$,
\item $L, M$ are both MHC over the base field $\kk\subset\RR$,
\item $\mathfrak{g}$ carries a MHS, and we consider it as a MHC concentrated in degree zero.
\end{itemize}
All of these structures are mutually compatible.

Furthermore, through the whole section we make the following hypotheses, which are clearly satisfied when coming from geometry:
\begin{itemize}
\item $L$ and $M$ have finite-dimensional cohomology in each degree,
\item $H^{<0}(L)=0$,
\item $M$ is bounded above,
\item the restriction of $\epsilon$ to $H^0(L)$ is injective.
\end{itemize}

In particular, when one forms the cone $C$ of $\epsilon$ then $H^0(C)=0$. So there is a pro-Artin algebra $R$ over $\kk$ that pro-represents $\Def(L,\epsilon)$ and there are jump ideals $J_k^i(M\otimes R, d_u)\subset R$ corresponding to $\Def_k^i(L,M,\epsilon)$. Our goal is to show:

\begin{theorem}
\label{Theorem2-section}
In this situation, $R$ has a MHS (as projective limit of MHS on $R_n:=R/\mathfrak{m}_R^n$) and the jump ideals are sub-MHS (in each $R_n$). This MHS is invariant under an augmented quasi-isomorphism of $(L,M)$.
\end{theorem}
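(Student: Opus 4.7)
The plan is to combine three ingredients: the $L_\infty$ cone construction of \S~\ref{section:cone-module} to absorb the augmentation, the Cirici-Sopena homotopy transfer theorem in a Hodge-theoretic setting to replace $(L,M)$ by its cohomology pair while preserving the MHS, and then the concrete finite-dimensional pro-representability recipe of \S~\ref{section:pro-representability} applied to the transferred data.

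First I would form the $L_\infty$ cone $C$ of $\epsilon$ and, by Theorem~\ref{theorem:module-over-cone}, view $M$ as an $L_\infty$ module over $C$ with $\Def_k^i(C,M)=\Def_k^i(L,M,\epsilon)$; since $\epsilon$ is injective on $H^0(L)$ we have $H^0(C)=0$ and we land in the pro-representable situation. To keep cones and shifts in a category where MHS behave functorially, apply the functor $\Dec$ as in \S~\ref{section:AHC}, so $(C,M)$ becomes an AHC + $L_\infty$ pair. Then I would invoke the Cirici-Sopena transfer, applied to the sum-construction $C\oplus M$ of \S~\ref{section:L-infinity-pair-sum}, to obtain a weakly equivalent transferred $L_\infty$ structure on $H(C)\oplus H(M)$ whose operations $\ell_n$, $m_n$ are morphisms of MHS and whose weak equivalence is a morphism of AHC. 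By invariance under weak equivalence, the transferred pair $(H(C),H(M))$ pro-represents the same augmented constrained functor and defines the same $R$ and $J_k^i\subset R$.

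Now everything is finite-dimensional linear algebra internal to MHS. Following \S~\ref{section:pro-representability}, the algebra $R$ is a quotient of the completed symmetric algebra $\widehat{\Sym}(H^1(C)^*)$ by the ideal dual to the Maurer-Cartan operations $\sum_{n\geq 2}\ell_n/n!:\Sym^n(H^1(C))\to H^2(C)$; because each $\ell_n$ is now an MHS-morphism, both $\widehat{\Sym}(H^1(C)^*)$ and the MC ideal lie in $\mathbf{\widehat{Art}+MHS}$, yielding a compatible MHS on each $R_n=R/\mathfrak{m}_R^n$ and hence on the projective system $R$. For the jump ideals, the universal Maurer-Cartan element $\omega_u\in H^1(C)\otimes R$ corresponds by universality to $\mathrm{id}_R$, and hence, in the Tannakian picture of \S~\ref{section:tannakian}, is $\mathscr{G}$-invariant once $R$ carries its MHS. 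The universal differential $d_u$ on $H(M)\otimes R$, built from $\omega_u$ and the MHS-morphisms $m_n$ via equation~\eqref{equation:derivation-M}, is then MHS-equivariant. Choosing bases of $H(M)$ adapted to the Hodge decomposition, the matrix of $d_u$ has entries in $R$ of controlled MHS type, and the minors defining $J_k^i$ are polynomial expressions in those entries, generating a sub-MHS ideal in $R_n$. Invariance under augmented quasi-isomorphism follows from the functoriality of the cone construction (Theorem~\ref{theorem:module-over-cone}) and of the transfer.

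The main obstacle I anticipate is the interaction of the cone construction with the Cirici-Sopena transfer in the presence of the augmentation. The Fiorenza-Manetti higher operations on $C$ are built from $\epsilon$ and iterated brackets in $\mathfrak{g}$, and the transfer must be carried out using contracting homotopies and sections of $C\to H(C)$ that respect the direct-sum decomposition $C^1=L^1\oplus\mathfrak{g}$ and the sub-MHS $\mathfrak{g}\subset C^1$. Verifying that such Hodge-compatible sections exist and that the inductive formulas producing the transferred $\ell_n$, $m_n$ preserve the filtrations $W$ and $F$ throughout is where the technical work will live; once this is established, all the remaining steps reduce to finite-dimensional bookkeeping in the abelian category of MHS.
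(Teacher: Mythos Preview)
Your proposal is correct and follows essentially the same route as the paper: form the $L_\infty$ cone, pass to AHC via $\Dec$, apply the Cirici--Sopena transfer (using the sum-construction $C\oplus M$ to handle the module), then read off the MHS on $R$ and on the jump ideals via the Tannakian argument that $\omega_u$ is $\mathscr{G}$-fixed and hence $d_u$ is a morphism of MHS.

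One remark on your final paragraph: the obstacle you anticipate---finding Hodge-compatible contracting homotopies on $C$ that respect the decomposition $C^1=L^1\oplus\mathfrak{g}$---is not quite how the paper (following Cirici--Sopena) proceeds. Rather than transferring directly from $C$ via an explicit contraction, the paper first builds a \emph{minimal model} $\mathscr{C}\to C$ in the AHC + $L_\infty$ setting (this is where the hypothesis $H^0(C)=0$ is used, to make the inductive construction terminate), separately a minimal model $\mathscr{M}\to M$ of complexes (via Beilinson's equivalence, no connectivity needed), lifts the module structure by the cofibration property, and only then applies the transfer to $\mathscr{C}\oplus\mathscr{M}$, which already carries a bona fide MHS in each degree. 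So the delicate step is the minimal-model construction, not the choice of splittings on the original $C$; once one is at the minimal model, the transfer is the ``easy'' half. Your approach of transferring directly on $C\oplus M$ would require checking that $H^0(C\oplus M)=H^0(M)$ causes no trouble, which is exactly why the paper separates the two pieces.
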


First we want to form the $L_\infty$ cone $C$ of $\epsilon$. 

\begin{lemma}
The pair $(C,M)$ has a structure of absolute Hodge complex + $L_\infty$ pair.
\end{lemma}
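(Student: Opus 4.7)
The plan is to build everything from the AHC world rather than directly from MHC, since Section~\ref{section:AHC} already warns us that shifts and cones are incompatible with the MHC axioms but fit naturally into the AHC framework. So first I would apply $\Dec$ to the weight filtrations of $L$, $M$ and $\mathfrak{g}$ to produce an \emph{AHC + augmented Lie pair}. Because $\Dec$ is lax symmetric monoidal, the comparison map \eqref{equation:dec-symmetric-monoidal} composed with the Lie bracket, the $L$-action on $M$, and the augmentation $\epsilon$ all give honest morphisms of AHC. So $(\Dec L, \Dec M, \Dec \epsilon)$ is still a DG Lie pair with an augmentation to the AHC $\mathfrak{g}$, and from now on I work inside the category of AHC.

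Next I would use the trick of \S~\ref{section:L-infinity-pair-sum} and the proof of Theorem~\ref{theorem:module-over-cone}: form the augmented DG Lie algebra $L\oplus M$ with augmentation $\epsilon'=(\epsilon,0)$ to $\mathfrak{g}$. As a direct sum of AHC this is again an AHC, the bracket on $L\oplus M$ and the morphism $\epsilon'$ are morphisms of AHC, and its $L_\infty$ cone is $C'=C\oplus M$. Because the mapping cone of a morphism of AHC is again an AHC (Section~\ref{section:AHC}), this immediately equips $C\oplus M$ with the AHC structure
\begin{equation*}
W_k C'^n = W_k L^n \oplus W_k M^n \oplus W_k \mathfrak{g}^{n-1}, \qquad
F^p C'^n_\CC = F^p L^n_\CC \oplus F^p M^n_\CC \oplus F^p \mathfrak{g}^{n-1}_\CC,
\end{equation*}
which restricts to an AHC structure on each factor $C$ and $M$. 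This handles the issue with shifts once and for all.

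It remains to check that the Fiorenza–Manetti $L_\infty$ operations on $C'$ are morphisms of AHC, as this then gives both the $L_\infty$ structure on $C$ and the $L_\infty$ module structure on $M$ over $C$ in a manner compatible with the AHC filtrations. The bracket $\ell_2$ is built from the bracket on $L\oplus M$, which is a morphism of AHC by the previous step. The higher brackets $\ell_{n+1}(u_1,\dots,u_n,a)$ are (up to the universal Bernoulli-type constants $B_n$ and signs, which do not affect filtration compatibility) sums of iterated compositions of the bracket on $\mathfrak{g}$ with one evaluation of $\epsilon'$, so they are finite linear combinations of morphisms of AHC between appropriate tensor powers, hence are themselves morphisms of AHC. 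As observed in the proof of Theorem~\ref{theorem:module-over-cone}, $\epsilon'$ vanishes on $M$, so these higher operations carry no component landing in $M$; the induced module operations on $M$ reduce to the differential and to $m_2$ coming from the original $L$-action, both of which are morphisms of AHC by construction.

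The main thing to be careful about is keeping track of the filtration degrees across all these shifts and tensor powers; the key point enabling this is precisely that in the AHC formalism the shifts of $F$ and $W$ do not offset each other (in contrast to the MHC case), so that the operations of degree $2-n$ appearing in the $L_\infty$ structure remain filtration-preserving in the strict AHC sense, without requiring any renormalisation of the weight filtration as would be needed if one tried to stay within MHC.
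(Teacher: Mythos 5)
Your proposal is correct and follows essentially the same route as the paper: apply $\Dec$ to get an AHC + augmented Lie pair using lax symmetric monoidality, form the cone inside AHC where shifts of the weight filtration cause no trouble, and check from the Fiorenza--Manetti formulas that the $L_\infty$ operations and the module action preserve the filtrations. Your explicit detour through $C'=C\oplus M$ as the cone of $\epsilon'$ on $L\oplus M$ is exactly the device the paper already set up in Theorem~\ref{theorem:module-over-cone}, so it merely makes explicit what the paper's proof leaves implicit.
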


\begin{proof}
First, applying the functor $\Dec$ to the whole MHC + augmented Lie pair $(L,M,\epsilon)$ gives us an AHC + Lie pair: on the one hand it sends MHC to AHC and on the other hand it is lax symmetric monoidal (using the map~\eqref{equation:dec-symmetric-monoidal}) so preserves this compatibility with the algebraic structures.

Then form the cone as explained in \S~\ref{section:AHC}, which is again an AHC. And it is straightforward to see (with the formulas in \S~\ref{section:the-cone}) that the $L_\infty$ structure of $C$ is compatible with its AHC structure, and that the action of $C$ on $M$ is also compatible with these AHC. Indeed in the category of AHC there is no problem of shift of the weight filtration so these multilinear operations preserve the weight filtration in the simplest way.
\end{proof}

Then we apply a new powerful theorem, the homotopy transfer of structure in mixed Hodge theory.

\begin{lemma}
\label{lemma:apply-transfer}
The pair $(H(C),H(M))$ has an induced $L_\infty$ structure compatible with its MHS and $(C,M)$ is quasi-isomorphic, in the sense of AHC + $L_\infty$ pair, to it.
\end{lemma}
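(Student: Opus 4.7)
The plan is to reduce from $L_\infty$ pairs to $L_\infty$ algebras via the sum construction of Budur-Rubi\'o recalled in \S\ref{section:L-infinity-pair-sum}, and then invoke the homotopy transfer theorem of Cirici-Sopena \cite{CiriciSopena} in the mixed Hodge setting on this auxiliary $L_\infty$ algebra.

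First, I form the $L_\infty$ algebra $C \oplus M$ with structure maps $j_n$ given by formula (\ref{equation:L-infinity-pair-sum}). As a graded vector space, $C \oplus M$ inherits an AHC structure as the direct sum of those of $C$ and $M$, with componentwise filtrations $W$ and $F$ and with differential $j_1 = \ell_1 \oplus m_1$ respecting the summand decomposition. Because $(C,M)$ is assumed to be an AHC + $L_\infty$ pair, each $\ell_n$ and $m_n$ is filtration-compatible in every variable, and by direct inspection of (\ref{equation:L-infinity-pair-sum}) so is each $j_n$. Hence $C \oplus M$ is an AHC + $L_\infty$ algebra.

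Next, I apply the Cirici-Sopena transfer theorem. It equips $H(C \oplus M)$ with a transferred $L_\infty$ structure compatible with the induced MHS, together with a weak equivalence $C \oplus M \to H(C \oplus M)$ in the category of AHC + $L_\infty$ algebras, all higher homotopies respecting the filtrations $W$ and $F$.

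Finally, I extract the pair. The cohomology splits canonically as $H(C \oplus M) = H(C) \oplus H(M)$ since $j_1 = \ell_1 \oplus m_1$. The contraction data underlying Cirici-Sopena's construction can be chosen compatibly with this splitting, by applying their theorem separately to the AHC $C$ (with its $L_\infty$ structure) and to the AHC $M$, and then taking the direct sum. The transferred operations $\tilde{j}_n$ on $H(C) \oplus H(M)$ then again have the form (\ref{equation:L-infinity-pair-sum}) relative to some operations $\tilde{\ell}_n$ on $H(C)$ and module action maps $\tilde{m}_n$ of $H(C)$ on $H(M)$, which by the characterization recalled in \S\ref{section:L-infinity-pair-sum} defines precisely an induced $L_\infty$ pair structure on $(H(C), H(M))$. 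The weak equivalence of AHC + $L_\infty$ algebras from the previous step then decomposes into the desired weak equivalence of AHC + $L_\infty$ pairs. The main obstacle here is exactly this compatibility of the Cirici-Sopena transfer with both the AHC structure \emph{and} the direct sum decomposition: the former is the content of their theorem applied as a black box, while the latter is assured by choosing contractions as direct sums of those on $C$ and on $M$ separately, so that the transferred higher operations retain the pair form (\ref{equation:L-infinity-pair-sum}).
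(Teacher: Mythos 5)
Your overall skeleton --- pass to the auxiliary $L_\infty$ algebra $C\oplus M$ via the sum construction \eqref{equation:L-infinity-pair-sum}, transfer, and then read off the pair structure from the transferred operations --- coincides with the final step of the paper's proof, and your observation that the contraction data can be chosen summand-by-summand so that the transferred operations retain the pair form is the same device the paper (following Budur--Rubi\'o) uses. The gap is in the middle: you invoke the Cirici--Sopena transfer theorem ``as a black box'' on the AHC $+$ $L_\infty$ algebra $C\oplus M$, but that theorem does not take an arbitrary AHC as input. An AHC is a zig-zag of filtered complexes whose filtrations only induce a MHS \emph{on cohomology}; the Cirici--Sopena transfer starts from a \emph{minimal model}, i.e.\ a single object carrying an honest (infinite-dimensional) MHS in each degree compatibly with the operations, and transfers from there to $H$. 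The paper therefore spends the bulk of its proof on a first essential step that your argument omits entirely: building a minimal model $\mathscr{C}\to C$ of $L_\infty$ algebras with MHS (Morgan, Cirici--Guill\'en, Cirici--Roig), a minimal model $\mathscr{M}\to M$ of complexes (via Beilinson's equivalence, with no condition on $H^0(M)$), and lifting the action map $C\to\End(M)$ to a map $\mathscr{C}\to\End(\mathscr{M})$ using the cofibration property of minimal models. Only then is $\mathscr{C}\oplus\mathscr{M}$ formed and the transfer applied.

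This omission also hides where the hypothesis $H^0(C)=0$ enters, which your proof never uses: the inductive construction of the minimal model of an $L_\infty$ algebra breaks down if $H^0\neq 0$, because newly added generators in degree $n$ combine with degree-zero elements and the induction cannot close. Note in particular that you could not repair the argument by running a minimal-model construction directly on $C\oplus M$, since $H^0(C\oplus M)=H^0(M)$ need not vanish; the paper's separate treatment of $C$ (as an algebra, where $H^0=0$ holds) and $M$ (as a complex, where no such condition is needed) before summing is precisely what makes this work.
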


\begin{proof}
This follows essentially from \cite[\S~5]{CiriciSopena} (see also Remark~5.7 there) which just appeared. However the theory has been studied in much greater detail for the case of MHC + commutative algebras, for the needs of rational homotopy theory and for putting MHS on the rational homotopy groups, so let us explain more. There are two essential steps.

The first essential step of the classical story is: starting from an MHC + commutative algebra $A$ one can build a \emph{minimal model} for $A$, that is, a commutative DG algebra $\mathscr{A}$ with a quasi-isomorphism to $A$, built inductively by adding free generators so as to induce an isomorphism in cohomology in higher and higher degrees, and to put a MHS (infinite-dimensional) on $\mathscr{A}$ that on cohomology induces the MHS on $H(A)$. This was first done by Morgan \cite{Morgan}. One difficulty is that this whole construction is only unique up to homotopy and this requires developing the appropriate algebra to deal with it.

The hypothesis $H^0(A)=0$ is crucial here for the inductive step, else the newly added free generators to $\mathscr{A}$ of degree $n$ get combined with elements of degree $0$, producing new elements of degree $n$ and the induction cannot continue.

Then this theory has been well re-written with tools of homotopical algebra in the work of J.\  Cirici and collaborators. It is in \cite{CiriciGuillenComplexes} without multiplicative structures and in \cite{CiriciGuillen} and \cite{Cirici} for commutative algebras.  This works equally well with AHC because at each step are added to the minimal model pieces of the MHS of $H(A)$ : the essential lemma is \cite[Lemma~3.12]{CiriciGuillen}. The theory of minimal models, without MHS but for many more algebras over operads, is written in \cite{CiriciRoig}; this one works for $L_\infty$ algebras as stated there. Combining both gives minimal models for AHC + $L_\infty$ algebras.

To deal with the module we argue as follows. Starting from $(C,M)$ we build a minimal model $\mathscr{C}\rightarrow C$ of $L_\infty$ algebras and a minimal model $\mathscr{M}\rightarrow M$ as cochain complex (here this doesn't require any condition on $H^0(M)$ : this is Beilinson's equivalence between AHC and the derived category of MHS, re-written in \cite[\S~4]{CiriciGuillenComplexes}). Then we see the action map as a morphism of $L_\infty$ algebras $C\rightarrow \End(M)$ and we use the cofibration property of the minimal model (this is the main topic of \cite{Cirici}) to lift it to a map $\mathscr{C}\rightarrow\End{\mathscr{M}}$.

The second essential step, again written for commutative DG algebras, is: starting from the minimal model $\mathscr{A}$ with its MHS, develop the homotopy transfer theorem from this $\mathscr{A}$ to $H(A)$. This is the content of \cite{CiriciSopena} and it applies as well to $L_\infty$ algebras.

Here, to go from $L_\infty$ algebras to $L_\infty$ pairs, we form the $L_\infty$ algebra $\mathscr{C}\oplus\mathscr{M}$ as in~\eqref{equation:L-infinity-pair-sum} (this is actually how Budur-Rubi{\'o} prove their transfer of structure for $L_\infty$ pairs). We see again directly that this is a $L_\infty$ algebra with a compatible MHS and hence the homotopy transfer applies to this, from which we recover separately $H(\mathscr{C})=H(C)$ and $H(\mathscr{M})=H(M)$, with the MHS and all operations.
\end{proof}

From now on there are no more AHC or MHC; only complexes of MHS. So we have quite a lot of structure but we are only doing linear algebra. Our goal is to develop the analogue of \S~\ref{section:pro-representability} with MHS everywhere.

\begin{lemma}
The algebra $R$ that pro-represents $\Def(C)$ has a canonical MHS.
\end{lemma}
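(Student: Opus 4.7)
The plan is to apply the pro-representability construction of \S~\ref{section:pro-representability} entirely within the category of MHS, using that Lemma~\ref{lemma:apply-transfer} has put all the relevant $L_\infty$ operations in that category. Since the deformation functor is invariant under weak equivalences, I would first replace $C$ by its cohomology $H(C)$ equipped with the transferred $L_\infty$ structure whose operations $\ell_n$ are morphisms of MHS. As the hypotheses $H^{\leq 0}(C)=0$ and $\dim H^1(C)<\infty$ still hold, the concrete description of \S~\ref{section:pro-representability} applies: $R$ is the quotient of the complete symmetric algebra $\widehat{\Sym}((H^1(C))^*)$ by the closed ideal $I$ generated by the components of the Maurer-Cartan equation $\sum_{n\geq 1}\ell_n(\omega,\dots,\omega)/n!=0$, so that the truncation $R_n := R/\mathfrak{m}_R^n$ is a finite-dimensional quotient of $\Sym^{<n}((H^1(C))^*)$.

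The key verification is then that each object and arrow in this construction lives in the category of MHS. Since MHS form an abelian rigid tensor category, every dual $(H^i(C))^*$ is again a MHS and every symmetric power $\Sym^m((H^1(C))^*)$ is a MHS. By Lemma~\ref{lemma:apply-transfer} each component $\Sym^n(H^1(C)) \to H^2(C)$ of $\ell_n$ is a morphism of MHS, hence so is its dual $(H^2(C))^* \to \Sym^n((H^1(C))^*)$. The sub-space of $\Sym^{<n}((H^1(C))^*)$ generated by the images of these duals, together with all their multiples in the (commutative) algebra structure of $\Sym$, is therefore a sub-MHS (the multiplication map of $\Sym$ is itself a morphism of MHS, cf.\ the Tannakian discussion of \S~\ref{section:tannakian}). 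This sub-MHS is exactly the truncation of $I$ at order $n$, so $R_n$ inherits a MHS, and the projections $R_{n+1}\twoheadrightarrow R_n$ are compatible with these filtrations, giving the desired pro-MHS on $R$ in the sense of \S~\ref{section:tannakian}.

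The step I expect to be most delicate is checking that this MHS on $R$ is \emph{canonical}, i.e.\ independent of the choice of transferred $L_\infty$ structure on $H(C)$, since Lemma~\ref{lemma:apply-transfer} only produces such a transfer up to homotopy in the AHC~$+$~$L_\infty$ sense. Two such transfers are related by a weak equivalence $H(C) \to H(C)'$ of AHC~$+$~$L_\infty$ algebras; applying the coalgebra functor $\mathscr{C}$ yields a quasi-isomorphism of DG coalgebras strictly compatible with $W$ and $F$, and dualising produces an isomorphism $R \xrightarrow{\sim} R'$ in $\mathbf{\widehat{Art}+MHS}$ at each truncation. This same functoriality, applied one level up at the level of the augmented pair $(L,M,\epsilon)$, is also what will establish the invariance under augmented quasi-isomorphism claimed in Theorem~\ref{Theorem2-section}.
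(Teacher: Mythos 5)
Your proposal is correct and follows essentially the same route as the paper: both rely on Lemma~\ref{lemma:apply-transfer} to place the transferred operations on $H(C)$ inside the category of MHS, then observe that the pro-representing algebra of \S~\ref{section:pro-representability} is built functorially from this data, and finally handle canonicity via the fact that different transfers are related by homotopies themselves compatible with the MHS. The only (cosmetic) difference is that you work with the quotient of $\widehat{\Sym}((H^1(C))^*)$ by the Maurer--Cartan ideal while the paper dually takes $H^0(\mathscr{C}_n(H(C)))^*$ --- the two descriptions the paper already identifies in \S~\ref{section:pro-representability}.
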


\begin{proof}
This is what is done in \cite[~9]{Lefevre2} but we are now able to give a simpler proof, avoiding the somewhat unnatural hypothesis of Theorem~8.4 there (working with the cone as MHC and not as AHC).

Namely, seeing the definition of the functor $\mathscr{C}$ we see directly that $\mathscr{C}(H(C))$ is a coalgebra which is an inductive limit of finite-dimensional ones carrying a MHS. Denote these by $\mathscr{C}_n(H(C))$. Then the inductive limit of the finite-dimensional coalgebras $H^0(\mathscr{C}_n(H(C)))$ is dual to the projective limit of the Artin algebras $R_n$. This puts a MHS on $R$ as pro-Artin algebra.

Lastly we need to show that this is independent of the choices made in the homotopy transfer of structure. But in \cite{CiriciSopena} the transferred operations are related by a homotopy (in this algebraic, infinity sense) that are themselves again compatible by the MHS. So we only have to apply the classical proof that taking this $H^0(\mathscr{C}(H(C)))$ is invariant under homotopy of the operations of $H(C)$, in the category of MHS. See \cite{LodayVallette} for full details.
\end{proof}

Then we form the universal complex $H(M)\otimes R$, that we consider as the projective limit of the complexes $H(M)\otimes R_n$. It receives in each degree a MHS, by tensor product of the MHS of $H(M)$ and the one of $R_n$.

\begin{lemma}
The universal derivation $d_u:=d_{\omega_u}$ associated to the universal Maurer-Cartan element
\begin{equation*}
\omega_u\in \MC(H(C))(R_n)
\end{equation*}
on the universal complex $H(M)\otimes R_n$ is a morphism of MHS.
\end{lemma}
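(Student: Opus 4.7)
The plan is to expand $d_u$ using formula~\eqref{equation:derivation-M} into a finite sum (finite on $R_n$, since $\omega_u\in H^1(C)\otimes\mathfrak{m}_{R_n}$ and $\mathfrak{m}_{R_n}^n=0$) of terms of the shape
\begin{equation*}
\xi\longmapsto \frac{1}{k!}\, m_{k+1}(\omega_u,\ldots,\omega_u,\xi) : H(M)\otimes R_n\longrightarrow H(M)\otimes R_n,
\end{equation*}
and to verify that each such term is a morphism of MHS. I would decompose the $k$-th term as the composition of three pieces: first the insertion $\xi\mapsto \omega_u^{\otimes k}\otimes\xi$ landing in $H^1(C)^{\otimes k}\otimes H(M)\otimes R_n^{\otimes(k+1)}$; second the transferred module operation $m_{k+1}$ acting on the $H(C),H(M)$ factors; third the iterated multiplication $R_n^{\otimes(k+1)}\to R_n$ folding the scalar tensor factors back together.

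Each of these three pieces should preserve the MHS separately, and for two of them this is essentially immediate. The iterated multiplication of $R_n$ is a morphism of MHS because $R_n$ was built as an algebra in the category of MHS. The operation $m_{k+1}\colon H(C)^{\otimes k}\otimes H(M)\to H(M)$ is a morphism of MHS by Lemma~\ref{lemma:apply-transfer}, which provides a $L_\infty$ structure on the pair $(H(C),H(M))$ compatible with the mixed Hodge structures. The delicate point is the third: one must argue that $\omega_u$ itself is ``Hodge-fixed'', i.e.\ that the map $\kk\to H^1(C)\otimes R_n$ sending $1$ to $\omega_u$ is a morphism of MHS; equivalently, in the Tannakian language of~\S~\ref{section:tannakian}, that $\omega_u$ lies in $W_0\cap F^0$ of $H^1(C)\otimes R_n$.

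The main obstacle is thus this last point, which I would tackle as follows. By construction, $R_n$ is the $\kk$-linear dual of the finite-dimensional coalgebra $H^0(\mathscr{C}_n(H(C)))$, with its MHS inherited functorially from the MHS on $H(C)$ via the $\Sym$ construction. The universal property characterises $\omega_u$ as the element of $H^1(C)\otimes R_n$ whose associated algebra map $R_n\to R_n$ is the identity; equivalently, $\omega_u$ is the identity of $H^1(C)$ viewed inside $H^1(C)\otimes H^1(C)^*\hookrightarrow H^1(C)\otimes R_n$, where the inclusion $H^1(C)^*\hookrightarrow R_n$ is dual to the natural projection from $H^0(\mathscr{C}(H(C)))$ onto its linear cogenerators $H^1(C)[1]\subset \mathscr{C}(H(C))$. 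Both of these are morphisms of MHS, and the identity of $H^1(C)$ is tautologically so, hence $\omega_u$ is Hodge-fixed.

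With this established, each term in the expansion of $d_u$ is a composite of morphisms of MHS and hence itself a morphism of MHS; summing the finitely many contributing terms concludes the proof. The invariance under augmented quasi-isomorphism, which will be needed later, would then follow from the functoriality statements already accumulated in~\S~\ref{section:pro-representability}--\ref{section:cone-module} and in Lemma~\ref{lemma:apply-transfer}.
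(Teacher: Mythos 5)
Your proposal is correct and follows essentially the same route as the paper: both arguments reduce to showing that $\omega_u$ is defined over $\kk$ and lies in $W_0\cap F^0$ by identifying it with a tautological identity morphism (the paper uses $\id\in\Hom(R_n^{**},R_n)$ under the coalgebra description of $\MC$, you use $\id_{H^1(C)}$ pushed along the dual of the projection onto cogenerators, which amounts to the same thing), and then conclude via formula~\eqref{equation:derivation-M} that $d_u$ is a composite of MHS-morphisms. Your write-up merely makes the term-by-term decomposition and the finiteness of the sum on $R_n$ more explicit than the paper does.
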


\begin{proof}
The Maurer-Cartan equation defines a functor
\begin{equation}
\MC^{\mathbf{MH}}(H(C)):\mathbf{Art+MHS}\longrightarrow\mathbf{Set}
\end{equation}
by taking the Maurer-Cartan elements in $H(C)\otimes \mathfrak{m}_A$ that are defined over $\kk$ and in $W^0 \cap F^0$. This is the same thing as, for $A \in \mathbf{Art+MHS}$ with $\mathfrak{m}_A^n=0$,
\begin{equation}
\MC^{\mathbf{MH}}(H(C))(A) = \Hom_{\mathbf{CoAlg+MHS}}(A^*, H^0(\mathscr{C}_n(H(C)))) .
\end{equation}
The universal Maurer-Cartan element $\omega_u$ at order $n$ corresponds to the image in $\MC^{\mathbf{MH}}(H(C))(R_n)$ of the identity $\id \in \Hom(R^{**}_n, R_n)$ (here everything is finite-dimensional so $R_n^{**}=R_n$). So we see that, because of its universal property, $\omega_u$ is defined over $\kk$ and in $W_0 \cap F^0$.

Finally, we see then from formula~\eqref{equation:derivation-M} that the corresponding universal derivation respects the MHS.
\end{proof}

\begin{remark}
Here we could have written our results in slightly more generality: we actually have shown that for any $A\in \mathbf{Art+MHS}$ and for \emph{any} $\omega\in \MC^{\mathbf{MH}}(H(C))(A)$ the derivation $d_\omega$ on $H(M)\otimes A$ preserves the MHS; we call these $\omega$ \emph{mixed Hodge Maurer-Cartan elements}. From the Tannakian point of view \S~\ref{section:tannakian}, such an $\omega$ is a fixed point of the action of the Tannakian group $\mathscr{G}$ and this implies that the differential that it defines is $\mathscr{G}$-equivariant, that is, a morphism of MHS; see the argument below.
\end{remark}

To sum up, what we get on $H(M)\otimes R_n$ is a whole complex of free $R_r$-modules with a MHS on each term and a derivation $d_u$ which is in each degree a morphism of MHS; this also goes to the projective limit.

\begin{lemma}
The jump ideals $J_k^i(H(M)\otimes R, d_u) \subset R$ are sub-MHS.
\end{lemma}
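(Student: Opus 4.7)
My plan is to give an intrinsic, functorial description of the jump ideal as the image of a canonical morphism of mixed Hodge structures, then conclude that this image is a sub-MHS. I first reduce to finite level: set $R_n := R/\mathfrak{m}_R^n$, which lies in $\mathbf{Art+MHS}$ by the preceding lemmas, and write $F^j := H(M)^j \otimes R_n$, a free $R_n$-module of finite rank which carries a MHS compatible with its $R_n$-module structure (via the tensor product of MHS). By the previous lemma the universal differential $d_u^j : F^j \to F^{j+1}$ is both $R_n$-linear and a morphism of MHS, and so is the direct sum
\begin{equation*}
\phi := d_u^{i-1} \oplus d_u^i : F^{i-1} \oplus F^i \longrightarrow F^i \oplus F^{i+1},
\end{equation*}
where both sides carry the direct sum MHS.

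The key observation is that, for any $R_n$-linear map $\phi:P\to Q$ between free $R_n$-modules of finite rank, the ideal of $r$-minors of $\phi$ admits a basis-free description as the image of the composition
\begin{equation*}
\Lambda^r P \otimes_{R_n} \Lambda^r Q^\vee \xrightarrow{\;\Lambda^r\phi \,\otimes\, \mathrm{id}\;} \Lambda^r Q \otimes_{R_n} \Lambda^r Q^\vee \xrightarrow{\;\mathrm{ev}\;} R_n ,
\end{equation*}
where $Q^\vee$ is the $R_n$-dual; indeed, evaluating a decomposable tensor $e_{j_1}\wedge\cdots\wedge e_{j_r}\otimes f_{i_1}^*\wedge\cdots\wedge f_{i_r}^*$ in local bases reproduces exactly the $r\times r$ minor $\det(a_{i_s,j_t})$ of the matrix of $\phi$. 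All the functors involved — direct sum, tensor product, exterior power, $R_n$-dual and the evaluation pairing — are internal to the symmetric monoidal category of $R_n$-modules with compatible MHS. Applying this with $r = \mathrm{rank}(F^i) - k + i$ and the morphism of MHS $\phi$ above, the composite is a morphism of MHS, so its image in $R_n$ is a sub-MHS. This image is exactly $J_k^i(H(M)\otimes R_n, d_u)$; passing to the projective limit over $n$ yields that $J_k^i(H(M)\otimes R, d_u) \subset R$ is a sub-MHS, as wanted.

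I do not expect a real obstacle here, since all of the heavy lifting has been packaged into the earlier lemmas (existence of the MHS on $R$, equivariance of $d_u$) and into the algebraic formalism of MHS as a Tannakian category: the final step is linear algebra over $R_n$ carried out inside the category of MHS. The one point requiring mild care is making the intrinsic ``exterior power plus evaluation'' description of the ideal of minors canonical and coincide with the classical definition via Budur--Wang; alternatively one can argue Tannakianly as in \S~\ref{section:tannakian}, noting that since $\phi$ is $\mathscr{G}$-equivariant, the image of $\Lambda^r\phi$ paired with $\Lambda^r Q^\vee$ produces a $\mathscr{G}$-stable subspace of $R_n$, which then generates a $\mathscr{G}$-stable (hence sub-MHS) ideal because the multiplication on $R_n$ is itself $\mathscr{G}$-equivariant.
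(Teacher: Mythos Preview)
Your proof is correct. Both your argument and the paper's hinge on the same observation: once $d_u$ is a morphism of MHS, the ideal of minors is built from it by operations that stay inside the category of MHS. The packaging differs, however. You give the basis-free description of the ideal of $r$-minors as the image of
\[
\Lambda^r_{R_n} P \otimes_{R_n} \Lambda^r_{R_n} Q^\vee \xrightarrow{\Lambda^r\phi\otimes\id} \Lambda^r_{R_n} Q \otimes_{R_n} \Lambda^r_{R_n} Q^\vee \xrightarrow{\mathrm{ev}} R_n,
\]
and conclude because exterior powers, $R_n$-duals, and evaluation are all available internally to the abelian tensor category of $R_n$-modules in MHS; the image of a morphism of MHS is then automatically a sub-MHS. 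The paper instead picks bases, writes the matrix $(a_{ij})$ of $D$, and argues directly with the Tannakian group $\mathscr{G}$: for $g\in\mathscr{G}$ the transformed entries $(g\cdot a_{ij})$ are the matrix of the \emph{same} $D$ in the bases $(g\cdot e_j)$, $(g\cdot f_i)$, whence the ideal of minors is $\mathscr{G}$-stable. Your approach is slightly cleaner and avoids the choice of bases; the paper's is more elementary and makes the $\mathscr{G}$-equivariance visible by hand. You already note this Tannakian alternative in your last paragraph, so the two proofs are really two phrasings of one argument.
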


\begin{proof}
Work over $R_n$ and form the map
\begin{equation}
d_u^{i-1}\oplus d_u^i : (H^{i-1}(M)\otimes R_n)\oplus (H^i(M)\otimes R_n) \longrightarrow (H^i(M)\otimes R_n) \oplus (H^{i+1}(M) \otimes R_n)
\end{equation}
Denote by $P$ the left-hand side, $Q$ the right-hand side and $D$ this map. By choosing bases over $\kk$ of $H^\bullet(M)$, $P$ and $Q$ are free finitely generated $R_n$-modules.

Let us first deal with the jump ideals $J_k^i(H(M)\otimes R_n, d_u)\subset R_n$ for $n_i-k+1=1$ ($n_i=\dim(H^i(M))$), that is, the ideals generated by the coefficients of a matrix of $D$.

Using the Tannakian point of view of \S~\ref{section:tannakian} we can pretend that we are working with $\kk$-vector spaces with a $\kk$-linear action of the group $\mathscr{G}$:
\begin{itemize}
\item $R_n$ carries a $\mathscr{G}$-action as algebra,
\item $P,Q$ carry a $\mathscr{G}$-action as $R_n$-modules,
\item $D$ is $R_n$-linear and $\mathscr{G}$-equivariant,
\item we want to show that the ideal of $R_n$ generated by the coefficients of a matrix of $D$ (which is independent of the bases) is stable under the $\mathscr{G}$-action.
\end{itemize}
So let $(e_j)$ be a basis of $P$, $(f_i)$ be a basis of $Q$, over $R_n$. This defines a matrix $(a_{ij})$ of $D$, such that $D(e_j)=\sum_i a_{ij} f_i$. Let $I\subset R_n$ be the ideal generated by the $(a_{ij})$. Let $g\in\mathscr{G}$. Then $g\cdot I$ is the ideal generated by the $(g\cdot a_{ij})$. But this is the matrix of $D$ in the bases $(g\cdot e_j)$ and $(g\cdot f_i)$, indeed
\begin{equation}
D(g\cdot e_j) = g\cdot D(e_j) = g\cdot \big(\sum_i a_{ij} f_i \big) = 
\sum_i (g\cdot a_{ij}) (g\cdot f_i)
\end{equation}
using the $\mathscr{G}$-equivariance. Since $I$ is independent of the bases this gives $g\cdot I = I$.

The argument is the same for the general case of the jump ideals generated by minors of size $n_i-k+1$ of $D$: with the same notations but letting $I$ be the ideal generated by the minors of $(a_{ij})$ then $g\cdot I$ is the ideal generated by the minors of $(g\cdot a_{ij})$ so it is again $I$.
\end{proof}

\begin{remark}
In \cite{Lerer} a related result is proven working with gradings instead of MHS. In the case where the variety $X$ is compact and $\rho$ is the monodromy of a \emph{complex} VHS, then the above $L$ and $M$ carry on their cohomology pure \emph{complex} MHS and furthermore the pair $(L,M)$ is formal. From this it is possible to construct, in a naive way, a \emph{split} $\CC$-MHS on $\Ohat_\rho$ (first introduced in \cite[\S~2.3]{EyssidieuxSimpson}); this doesn't require to work with the cone because splitting the short exact sequence
\begin{equation}
0 \longrightarrow \mathfrak{g}/\epsilon(H^0(L)) \longrightarrow H^1(C) \longrightarrow H^1(L) \longrightarrow 0
\end{equation}
is equivalent to splitting the weight filtration (the left-hand side has weight $0$ and the right-hand side has weight $1$). So we can essentially work with a bigrading on the pure $\CC$-MHS of $H(L)$ and $H(M)$. The result of \cite[\S~4]{Lerer} is that the jump ideals are sub-$\CC$-MHS.

Of course, we recover this result by splitting our final MHS. But we would like to point out that
if we worked in some way with gradings instead of MHS (see the next section), then
the homotopy transfer of structure holds and our method produces gradings on $R$. And to use the previous lemma it suffices to replace the Tannakian group $\mathscr{G}$ by $\mathbb{G}_m$ (or $\mathbb{G}_m^2$ for bigradings), whose category of representations is equivalent to graded objects.
\end{remark}

\section{Application to the global structure}
\label{section:application}

A powerful method introduced by Budur-Rubi\'o is to use the weight grading on $H(L)$ and $H(M)$ and its compatibility with the higher operations. They manage to show that under the assumption that $H^1(L)$ has no weight zero or below, then there are only finitely many non-vanishing higher action maps. This is roughly because the action of elements of $H^1(L)$ on $H(M)$ of weight greater than one must increase the weight, but $H(M)$ has only finitely many weights. Then for $G=\CC^*$ this property forces the global loci $\Sigma_k^i$ to be sub-tori. In \cite{Lefevre3} we used similar ideas and we show that under this same hypothesis on $H^1(L)$ then there are only finitely many non-vanishing higher operations needed to write the deformation functor. A consequence of this is that $\Ohat_\rho$ has a presentation by generators and a finite number of polynomial relations, and the degree of those is related to the weights of the MHS on $H^2(L)$.

In those two earlier articles, the assertion that the higher operations are compatible with the weight comes from a result of Cirici-Horel \cite{CiriciHorel}. But now, we recover this directly with the transfer of structure with MHS, and we would like to get some consequences of the present theory.

So we work with the group $G=\CC^*$. The moduli space of representations of $\pi_1(X.x)$ is simply denoted by $\Mb(X,1)$ and it is also the moduli space of local systems, since the action of $G$ by conjugation is trivial and all local systems are stable. Then $\Mb(X,1)$ is isomorphic to the product of a complex torus $(\CC^*)^{b_1(X)}$ and a finite group. Here is what we get.

\begin{theorem}
Let $X$ be smooth algebraic or quasi-Kähler, let $W$ be an admissible VMHS over $X$. The irreducible components of the relative jump loci $\Sigma_k^i(W)\subset \Mb(X,1)$ at a representation $\rho$ are translated sub-tori.
\end{theorem}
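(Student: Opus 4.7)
The plan is to derive the sub-torus property from the local mixed Hodge structure theorem by combining it with the Budur-Rubi\'o global structure argument for $G = \CC^*$. I would fix a representation $\rho$ underlying a VMHS; for $G = \CC^*$, torsion characters underlie trivial VHS and are dense in every sub-torus of $\Mb(X, 1)$, so it suffices to verify the conclusion at a dense set of points on each irreducible component. Since $\CC^*$ is abelian, $\ad_\rho$ is the trivial local system $\underline{\CC}_X$, so $L = A^\bullet(X, \CC)$ has $H(L) = H^\bullet(X, \CC)$ with its standard MHS; in particular $H^1(L)$ has weights in $\{1, 2\}$, strictly positive. The module $H(M) = H^\bullet(X, V_\rho \otimes W)$ is a finite-dimensional MHS with weights in some uniformly bounded interval $[a, b]$.

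Next, I would apply the MHS homotopy transfer (Lemma~\ref{lemma:apply-transfer}) to obtain a transferred $L_\infty$-pair structure on $(H(L), H(M))$ compatible with the MHS. In particular, each module action $m_r \colon H(L)^{\otimes(r-1)} \otimes H(M) \to H(M)$ is a morphism of MHS. Restricted to inputs in $H^1(L)^{\otimes(r-1)}$, the source has total weight at least $(r-1) + w(\xi)$, so the image, lying in $H(M)$ of weight at most $b$, forces $r \leq 1 + b - a$. Hence only finitely many $m_r$ can be non-zero on $H^1(L)$-inputs, and the analogous bound holds for the transferred $\ell_r \colon H^1(L)^{\otimes r} \to H^2(L)$. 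Consequently, the universal derivation $d_u$ on $H(M) \otimes R$ is a polynomial of bounded degree in $\omega_u \in H^1(L) \otimes R$, so its matrix entries, and therefore all minors generating the jump ideal $J_k^i(W) \subset R = \widehat{\Sym}(H^1(L)^*)$, are polynomials of bounded degree in the formal coordinates.

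The final step is to globalize. This polynomial-of-bounded-degree description of $J_k^i(W)$ at $\rho$ is exactly the hypothesis of Budur-Rubi\'o's global structure theorem for $G = \CC^*$, which concludes that the irreducible components of $\Sigma_k^i(W) \subset \Mb(X, 1)$ through $\rho$ are translated sub-tori. The main obstacle is this globalization step, which is not purely formal: one must identify the formal coordinates on $R$ with log-coordinates on the identity component of $\Mb(X, 1)$ via the exponential $\exp \colon H^1(X, \CC) \to H^1(X, \CC^*)$, and then interpret the resulting polynomial equations as cutting out algebraic translated sub-tori. I would invoke the Budur-Rubi\'o argument directly for this, noting that the density of VMHS-monodromy points on each component ensures that the local polynomial analysis propagates to a global translated sub-torus description.
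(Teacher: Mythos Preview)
Your proposal is correct and follows essentially the same route as the paper: in rank one $\ad_\rho$ is trivial so $H(L)=H^\bullet(X,\CC)$ with $H^1$ of strictly positive weight, the MHS-compatible homotopy transfer forces the module maps $m_r(\omega,\dots,\omega,-)$ to raise weight by at least $r-1$ and hence vanish for large $r$, and then one invokes Budur--Rubi\'o's global theorem (the exponential/Ax--Lindemann argument via the resonance variety) to conclude. Two minor redundancies: since $L$ is abelian here the Maurer--Cartan locus is all of $H^1(X,\CC)$ and there are no higher $\ell_r$ to bound, so your remark about those is unnecessary; and the density-of-torsion-points argument is not really how the globalization works---the Budur--Rubi\'o theorem applies at a \emph{single} $\rho$ underlying a VMHS and already gives the sub-torus conclusion for the components through $\rho$, so you do not need to propagate anything along a dense set.
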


\begin{proof}
In rank one, the local system $\ad_\rho$ is trivial and it acts trivially on $V_\rho \otimes W$. The cohomology pair is $(H(X,\CC), H(X,V_\rho \otimes W))$. Let $(L,M)$ be the Lie pair computing this. Then $L$ is abelian, but the action of $L$ on $M$ is not trivial (it contains the multiplication of differential forms). So on the cohomology pair with $L_\infty$ structure, $H(L)$ is abelian but there are higher action maps on $H(M)$.

Doing this with the cone $C$, then $H^0(C)=0$ and $H^n(C)=H^n(L)$ for $n\geq 1$. So there are also no higher operations on $H(C)$, but higher action maps of $H(C)$ on $H(M)$.

Consider $V_\rho$ as a VMHS of weight zero in the most trivial way. Because of the results of the previous section and in particular Lemma~\ref{lemma:apply-transfer}, the higher operations can be taken to be strictly compatible with the weight filtrations that exist on $H(L)=H(X,\CC)$ and on $H(M)=H(X,V_\rho \otimes W)$. Take a Maurer-Cartan element $\omega$ of $H(L)$: this is just an element of the vector space $H^1(X,\CC)$ (with no equations), then it has weight greater than $1$ for \emph{smooth} varieties. This implies that the maps
\begin{equation}
\label{equation:operations-increase-weight}
m_n(\omega,\dots,\omega,-) : H^\bullet(M) \longrightarrow H^{\bullet+1}(M)
\end{equation}
increase the weights by at least $n-1$ ($m_n$ has $n-1$ arguments $\omega$ of weight $\geq 1$). Since $H(M)$ is bounded above, and has in each degree finitely many weights, then only finitely many of the above maps can be non-zero.

This is the setting for applying \cite[Theorem~6.1]{BudurRubio}. Roughly, the sub-torus property is implied by the fact that there is an exponential map $H^1(X, \CC)\rightarrow \Mb(X,1)$, carrying the algebraic set called \emph{resonance variety}
\begin{equation}
\mathcal{R}_k^i := \{ \omega \in H^1(L)\ |\ \dim H^i(H(M),d_\omega) \geq k \}
\end{equation}
into the algebraic set $\Sigma_k^i(W)$ and inducing an isomorphism of germs
\begin{equation}
R/J_k^i\simeq \Ohat_\rho/J_k^i(W)
\end{equation}
at $0$ and $\rho$. This an exponential Ax-Lindemann Theorem as explained in \cite{BudurWangQuasi}. The condition that there are only finitely many action maps is needed in order to show that the $\mathcal{R}_k^i$ are indeed closed sub-schemes of $H^1(X,\CC)$.
\end{proof}

For $\rho$ the trivial representation and $W$ the trivial VMHS of rank one, this is the argument of Budur-Rubi\'o.

\begin{remark}
If we could develop our theory also for singular varieties, then the condition $W_0 H^1(X,\CC)=0$ would be necessary: this is the condition under which the maps \eqref{equation:operations-increase-weight} strictly increase the weights and then under which one can guarantee that there are only finitely many of those that do not vanish. This same condition appears also in the work of Esnault-Kerz and in our \cite{Lefevre3}.
\end{remark}

This conclusion is not new when $X$ is algebraic and follows from the theory developed in \cite{BudurWangAbsolute}, where even 
more general relative jump loci are considered. However this theory is heavily algebraic and relies among other on the comparison with the moduli space of vector bundles with an integrable connection. Thus we generalize this here to quasi-Kähler manifolds and we give a Hodge-theoretic proof more in the spirit of our previous work. Also, there is discussed a particular notion of local systems (or more generally, sheaves or complexes of sheaves) of \emph{geometric origin}; here we somehow replace this with an abstract VMHS.

\bibliographystyle{amsalpha}
\bibliography{Bibliographie}

\end{document}